\newtheorem{theo}{Theorem}[section]
\newtheorem{lemm}[theo]{Lemma}
\newtheorem{defi}[theo]{Definition}
\numberwithin{equation}{section}
\newcommand{\bal}{\begin{align}}
\newcommand{\bbal}{\begin{align*}}
\newcommand{\beq}{\begin{equation}}
\newcommand{\eeq}{\end{equation}}
\newcommand{\bca}{\begin{cases}}
\newcommand{\eca}{\end{cases}}
\newcommand{\pa}{\partial}
\newcommand{\fr}{\frac}
\newcommand{\na}{\nabla}
\newcommand{\De}{\Delta}
\newcommand{\de}{\delta}
\newcommand{\cd}{\cdot}
\newcommand{\dd}{\mathrm{d}}
\newcommand{\B}{\dot{B}}
\newcommand{\FB}{\dot{FB}}
\newcommand{\LL}{\tilde{L}}
\newcommand{\R}{\mathbb{R}}
\newcommand{\J}{\mathcal{J}}
\newcommand{\Z}{\mathbb{Z}}
\newcommand{\e}{\vec{e}}
\newcommand{\ee}{\vec{e_1}}
\newcommand{\F}{\dot{FB}}
\begin{document}

\subjclass[2010]{35F21}
\keywords{Hamilton-Jacobi equation, Besov spaces, Ill-posedness}

\title[Ill-posedness for the Hamilton-Jacobi equation in Besov spaces $B^0_{\infty,q}$]{Ill-posedness for the Hamilton-Jacobi equation in Besov spaces $B^0_{\infty,q}$}

\author[J. Li]{Jinlu Li}
\address{School of Mathematics and Computer Sciences, Gannan Normal University, Ganzhou 341000, China \& Department of Mathematics, Sun Yat-sen University, Guangzhou 510275, China}
\email{lijl29@mail2.sysu.edu.cn}

\author[W. Zhu]{Weipeng Zhu}
\address{Department of Mathematics, Sun Yat-sen University, Guangzhou 510275, China}
\email{mathzwp2010@163.com}

\author[Z. Yin]{Zhaoyang Yin}
\address{Department of Mathematics, Sun Yat-sen University, Guangzhou 510275, China \& Faculty of Information Technology,  Macau University of Science and Technology, Macau, China}
\email{mcsyzy@mail.sysu.edu.cn}

\begin{abstract}
In this paper, we study the Cauchy problem for the following Hamilton-Jacobi equation
\bbal\bca
\pa_tu-\De u=|\na u|^2,\quad t>0, \ x\in \R^d,\\
u(0,x)=u_0, \quad \quad x\in \R^d.
\eca\end{align*}
We show that the solution map in Besov spaces $B^0_{\infty,q}(\R^d),1\leq q\leq \infty$ is discontinuous at origin. That is, we can construct a sequence initial data $\{u^N_0\}$ satisfying $||u^N_0||_{B^0_{\infty,q}(\R^d)}\rightarrow 0, \ N\rightarrow \infty$ such that the corresponding solution $\{u^N\}$ with $u^N(0)=u^N_0$ satisfies
\bbal
||u^N||_{L^\infty_T(B^0_{\infty,q}(\R^d))}\geq c_0, \qquad \forall \ T>0, \quad N\gg 1,
\end{align*}
with a constant $c_0>0$ independent of $N$.
\end{abstract}

\maketitle

\section{Introduction and main result}

In our paper, we study the viscous Hamilton-Jacobi equation,
\beq\bca\label{H-J}
\pa_tu-\De u=|\na u|^2,\quad t>0, \ x\in \R^d,\\
u(0,x)=u_0, \quad \quad x\in \R^d.
\eca\eeq
where $d\geq 1$, $\na=(\pa_{x_1},\cdots,\pa_{x_d})$.

Note that the problem \eqref{H-J} is the special form of the following viscous Hamilton-Jacobi equation,
\beq\bca\label{H-J-p}
\pa_tu-\De u=|\na u|^p,\quad t>0, \ x\in \R^d,\\
u(0,x)=u_0, \quad \quad x\in \R^d.
\eca\eeq
where $d\geq 1$, $p \geq 1$.

The viscous Hamilton-Jacobi equation owns both mathematical and physical interest. Indeed, from mathematical point of view, it is the simplest example of a parabolic PDE with a nonlinearity depending only on the first order spatial derivatives of $u$, and it describes a model for growing random interfaces, which is known as the Kardar-Parisi-Zhang equation (see \cite{KPZ86,KS88}).

An equation of the type \eqref{H-J-p} has attracted much attention \cite{A97,AB98,B92,BL99,BSW02,GGK03} in recent years. In the earliest of these papers \cite{B92}, under the assumptions that $p=1$ and $u_0\in C^3_0(\R^d)$, results on the existence and uniqueness of a classical solution was obtained. Subsequently, under the hypothesis that $u_0\in C^2_0(\R^d)$ and $p \geq 1$, the existence of a unique local (and actually global) classical solution of problem \eqref{H-J-p} was established in \cite{AB98}. This result was recently extended to $u_0\in C_0(\R^d)$ and $p>0$ in \cite{GGK03}. Under the much weaker assumptions on $u_0$, the existence of a suitably-defined weak solution when $1\leq p<2$
and $u_0$ is a Radon measure was investigated in \cite{A97}. In \cite{BL99}, Benachour and  Lauren\c{c}ot investigated the existence and uniqueness of nonnegative weak solutions to problem \eqref{H-J-p} with initial data $u_0$ in the space of bounded and non-negative measures when $1<p<(d+2)/(d+1)$ and in $L^1(\R^d) \cap L^q(\R^d)$ when $(d+2)/(d+1) \leq p<2$ provided that $q$ is large enough. The result was generalized by Ben-Artzi, Souplet and Weissler in \cite{BSW02}.
For more results of Hamilton-Jacobi equation, we refer the readers to see \cite{BKL04,I05,IK17,KW08,LS03,S11}.

To solve the original equations, we may consider the following integral equations:
$$u(t)=e^{t\De}u_0+\int^t_0e^{(t-\tau)\De}(|\na u|^2)\dd \tau,$$
where
$$e^{t\De}f=\mathcal{F}^{-1}[e^{-|\xi|^2t}\hat{f}(\xi)].$$
Then we can define the maps $A_n$ for $n=1,2,\cdots$ by the recursive formulae
\bbal
&A_1(f):=e^{t\De}f,
\\&A_n(f):=\sum\limits_{n_1, n_2\geq 1,n_1+n_2=n}\sum^d_{i=1}\int^t_0e^{(t-\tau)\De}[\pa_iA_{n_1}(f)][\pa_iA_{n_2}(f)]\dd \tau \quad \mathrm{for} \quad n\geq 2.
\end{align*}

In this paper, motivated by \cite{BP,IK,Wbao}, we can construct a sequence $f^N$ such that $||f^N||_{B^0_{\infty,q}}\rightarrow 0$ when $N$ tends to infinity. Here, it is easy to check that
\bbal
||A_1(f^N)||_{B^0_{\infty,q}(\R^d)}\rightarrow 0, \qquad N\rightarrow \infty.
\end{align*}
The main key point, we can show that
\bbal
||A_2(f^N)||_{B^0_{\infty,q}(\R^d)}\geq c\de^2 \ (\mathrm{or} \ c\de^3), \qquad N\gg1,\ \de\ll 1,
\end{align*}
for a small constant $c$ independent of $N$ and $q$. Furthermore, we also deduce that
\bbal
||u[f^N]-A_1(f^N)-A_2(f^N)||_{B^0_{\infty,q}(\R^d)}\leq C\de^3 \ (\mathrm{or} \ C\de^4), \qquad N\gg1,\ \de\ll 1,
\end{align*}
for a big constant $C$ independent of $N$ and $q$. This show that \eqref{H-J} is ill-posedness in Besov spaces $B^0_{\infty,q}(\R^d)$.

Our main ill-posedness result for Hamilton-Jacobi equation in Besov spaces reads as follows:
\begin{theo}\label{th1.3}
Let $d\geq 1$ and $s>1+\frac d2$. For $1\leq q\leq \infty$, Eq. \eqref{H-J} is ill-posedness in $B^0_{\infty,q}(\R^d)$ in the sense that the solution map is discontinuous at origin. More precisely, there exist a sequence of initial data $\{f^N\}_{N\geq 1}\subset H^s(\R^d)$ and a sequence of time $\{t_N\}_{N\geq 1}$ with
\bbal
||f^N||_{B^0_{\infty,q}(\R^d)}\rightarrow 0,  \quad  t_N\rightarrow 0, \quad N\rightarrow \infty,
\end{align*}
such that the corresponding sequence solutions $u[f^N]\in \mathcal{C}([0,\infty);H^s(\R^d))$ to initial value problem \eqref{H-J} with $u[f^N](0)=f^N$ satisfies
\bbal
||u[f^N](t_N)||_{B^0_{\infty,q}(\R^d)}\geq c_0,
\end{align*}
with a positive constant $c_0$ independent of $N$.
\end{theo}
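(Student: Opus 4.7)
The plan is to implement the Picard-iteration norm-inflation scheme sketched in the introduction. I would realize the mild solution as the series $u[f^N](t)=\sum_{n\geq 1}A_n(f^N)(t)$, convergent in $L^\infty_{t_N}(H^s)$ for a well-chosen $t_N\to 0$ (using $s>1+d/2$, for which $H^s$ is a good algebra for the quadratic nonlinearity $|\na u|^2$), and reduce the theorem via the decomposition $u[f^N]=A_1(f^N)+A_2(f^N)+R^N$ (or one level deeper if the quadratic inflation is insufficient) to three bounds: $\|A_1(f^N)(t_N)\|_{B^0_{\infty,q}}\to 0$, which is immediate from $\|f^N\|_{B^0_{\infty,q}}\to 0$ and the uniform boundedness of $e^{t\De}$ on $B^0_{\infty,q}$; a key lower bound $\|A_2(f^N)(t_N)\|_{B^0_{\infty,q}}\geq c\de^2$ (or $\|A_3(f^N)(t_N)\|_{B^0_{\infty,q}}\geq c\de^3$ for the cubic alternative); and a geometric upper bound on the tail $\|R^N(t_N)\|_{B^0_{\infty,q}}\leq C\de^3$ (resp.\ $C\de^4$). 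A triangle inequality then yields the conclusion with $c_0\simeq c\de^2/2$ (resp.\ $c\de^3/2$), $\de$ a small fixed parameter.

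For the construction of $f^N$ I would take high-frequency initial data of the schematic form $f^N(x)=\kappa_N\sum_{k=1}^{M(N)}g(x)\cos(\xi_k\cd x)$, with $g$ a fixed Schwartz bump, $M(N)\to\infty$, and $|\xi_k|\sim 2^N$. The essential input for the inflation bound is the pair of trigonometric identities
\[
\sin^2(\xi\cd x)=\tfrac12\bigl(1-\cos(2\xi\cd x)\bigr),\qquad 2\sin(\xi\cd x)\sin(\xi'\cd x)=\cos((\xi-\xi')\cd x)-\cos((\xi+\xi')\cd x),
\]
applied to the integrand of $A_2(f^N)(t)=\int_0^t e^{(t-\tau)\De}|\na e^{\tau\De}f^N|^2\,\dd\tau$: they produce a low-frequency residue with coefficient $\sim\kappa_N^2|\xi_k||\xi_{k'}|\,e^{-(|\xi_k|^2+|\xi_{k'}|^2)\tau}$, which after the time integration $\int_0^{t_N}2^{2N}e^{-2\cd 2^{2N}\tau}\,\dd\tau\lesssim 1$ (choosing $t_N$ with $2^{2N}t_N\gg 1$) becomes a persistent $L^\infty$-piece of size $\gtrsim\kappa_N^2M(N)^\alpha$, with $\alpha\in\{1,2\}$ depending on how the $\xi_k$ are arranged. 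Two regimes then appear: for $q$ large the $\xi_k$ may be spread across well-separated dyadic blocks, so the diagonal contribution $\xi_k=\xi_{k'}$ suffices and $\alpha=1$; for $q$ close to $1$ the $\ell^1$-structure of the Besov norm forces all $\xi_k$ into a single block, so one instead clusters them with near-resonant differences (e.g.\ $\xi_k=2^N e_1+k e_2$) and exploits the coherent Dirichlet-kernel superposition, giving $\alpha=2$.

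Finally, the higher-order tail is controlled by a standard Picard / geometric-series estimate $\|A_n(f^N)\|_{L^\infty_{t_N}(B^0_{\infty,q})}\leq C(C_0\de)^n$, following from product estimates in $H^s$ and the embedding $H^s\hookrightarrow B^0_{\infty,q}$ (valid for $s>d/2$), so that $R^N$ contributes $O(\de^3)$ and is strictly dominated by the inflation bound once $\de$ is fixed small. The main obstacle in executing this plan will be matching the inflation lower bound to the Besov norm uniformly in $q\in[1,\infty]$: each construction (dyadically spread vs.\ near-resonantly clustered) is tailored to a specific range of $q$, and the Dirichlet-kernel-type superposition must be accounted for carefully enough to beat the small normalization $\kappa_N$, which is also why the inflation rate may degrade from $\de^2$ to $\de^3$ at the endpoint $q=1$.
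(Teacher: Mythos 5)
Your overall architecture (write $u[f^N]=A_1(f^N)+A_2(f^N)+R^N$, show $A_1\to 0$, get a lower bound on $A_2$ at a time $t_N$ with $2^{2N}t_N\gtrsim 1$, and dominate $R^N$) is exactly the paper's, and your construction for large $q$ (dyadically spread frequencies, diagonal interaction, $\kappa_N^2M\sim\de^2$) matches the paper's $2<q\leq\infty$ case, where the inflation is read off the single low-frequency block $\dot{\De}_{-4}A_2(f^N)$. But your plan for $1\leq q\leq 2$ has a genuine gap. If all the $\xi_k$ lie in one dyadic annulus and the profile $g$ is \emph{not} translated from packet to packet, then $\|f^N\|_{B^0_{\infty,q}}\gtrsim |f^N(0)|=\kappa_N M\,g(0)$, so smallness of the data forces $\kappa_NM\to0$; on the other hand the low-frequency output of $A_2$ is pointwise bounded by $t_N2^{2N}\|f^N\|_{L^\infty}^2\lesssim(\kappa_NM)^2$, i.e.\ the square of a vanishing quantity. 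The coherent Dirichlet-kernel gain $M^2$ is therefore exactly cancelled by the normalization you are forced to impose, and no inflation results. The paper's actual mechanism for $q\leq2$ is different and is the crux of that case: the $\sim N$ wave packets are \emph{spatially separated} (the modulations $e^{i2^{\ell+1}\xi\cd\ee}$ are translations by $-2^{\ell+1}\ee$), so $\|f_N\|_{L^\infty}\sim\de N^{-1/(2q)}$ with no factor of $N$; the carrier frequencies are taken of the form $\pm2^N\ee\pm2^{\ell}\ee$ with $\ell\in\mathbb{N}(N)$ so that each packet's self-interaction lands in its \emph{own} dyadic block $j=\ell$ (cross terms $\ell\neq m$ are negligible by spatial separation); and the gain of $N^{1/q}$ then comes from the $\ell^q$ summation over the $\sim N$ output blocks, not from pointwise coherence. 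This yields $\|A_2(f_N)(t_N)\|_{B^0_{\infty,q}}\geq c\de^3$ at $t_N=\de2^{-2N}$.

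A second, smaller but still real gap is the remainder bound. Since $\|f^N\|_{H^s}\sim 2^{Ns}\to\infty$, a geometric-series estimate $\|A_n(f^N)\|\leq(C_0\de)^n$ cannot follow from product estimates in $H^s$, and the $H^s$ lifespan $\sim\|f^N\|_{H^s}^{-2}$ is far shorter than the inflation times $t_N$. One must work in a scaling-critical space in which the data is uniformly $O(\de)$: the paper uses the Fourier--Besov space $\FB^{0}_{1,2}$ (Lemma \ref{le2.2}, first estimate) for $q>2$ and a Koch--Tataru type space $X_T$ with $BMO$ data for $q\leq2$; smallness there simultaneously extends the $H^s$ solution globally via the blow-up criterion of Lemma \ref{le-hs} and gives $\|u[f^N]-A_1(f^N)-A_2(f^N)\|\lesssim\de^3$ (resp.\ $\de^3N^{-3/(2q)}$), which is then compared with the $A_2$ lower bound on the relevant frequency blocks.
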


Our paper is organized as follows. In Section 2, we give some preliminaries which will be used in the sequel. In Section 3, we will give the proof of Thorem \ref{th1.3}.\\

\noindent\textbf{Notation.} In the following, since all spaces of functions are over $\mathbb{R}^d$, for simplicity, we drop $\mathbb{R}^d$ in our notations of function spaces if there is no ambiguity. Let $C\geq 1$ and $c\leq 1$ denote constants which can be different at different place. We use $A\lesssim B$ to denote $A\leq CB$.

\section{Preliminaries}

In this section we collect some preliminary definitions and lemmas. For more details we refer the readers to \cite{B.C.D}.

Let $\varphi: {\mathbb R}^d\to [0, 1]$ be a radial and smooth function satisfying $\mathrm{Supp }\ \varphi\subset\mathcal{C}\triangleq \{\xi\in\mathbb{R}^d:\frac 3 4\leq|\xi|\leq \frac 8 3\}$, $\varphi\equiv 1$ for $\frac43\leq |\xi|\leq \frac32$ and
\bbal
\sum_{j\in \Z}\varphi(2^{-j}\xi)=1,
\end{align*}
for all $\xi\neq 0$. We set $\Psi(\xi)=1-\sum_{j\geq 0}\varphi(2^{-j}\xi)$. For $u \in \mathcal{S}'$, $q\in {\mathbb Z}$, we define the Littlewood-Paley operators: $\dot{\Delta}_q{u}=\mathcal{F}^{-1}(\varphi(2^{-q}\cdot)\mathcal{F}u)$, ${\Delta}_q{u}=\dot{\Delta}_q{u}$ for $q\geq 0$, ${\Delta}_q{u}=0$ for $q\leq -2$ and $\Delta_{-1}u=\mathcal{F}^{-1}(\Psi(\xi) \mathcal{F}u)$, and $\dot{S}_q{u}=\mathcal{F}^{-1}\big(\Psi(2^{-q}\xi)\mathcal{F}u\big)$. Here we use ${\mathcal{F}}(f)$ or $\widehat{f}$ to denote
the Fourier transform of $f$.

Applying the above decomposition, the Besov spaces $B^s_{p,r}(\R^d)$, the homogeneous Fourier--Besov spaces $\F_{p,r}^s(\mathbb{R}^{3})$ and the Chemin--Lerner type spaces $ \tilde{L}^{\lambda}(0,T; \F^{s}_{p,r}(\mathbb{R}^{3}))$ can be defined as follows:

\begin{defi}(\cite{B.C.D})
Let $s\in \mathbb{R}$ and $1\leq p,r\leq\infty$, the Besov space $B^s_{p,r}$ or $\dot{B}^s_{p,r}$ is defined by
\bbal
B^s_{p,r}(\R^d)=\Big\{f\in \mathcal{S}'(\R^d):||f||_{B^s_{p,r}}:=\big|\big|(2^{js}\|\Delta_j{u}\|_{L^p})_{j\in {\mathbb Z}}\big|\big|_{\ell^r}<\infty\Big\},
\end{align*}
or
\bbal
\dot{B}^s_{p,r}(\R^d)=\Big\{f\in \mathcal{S}'_h(\R^d):||f||_{\dot{B}^s_{p,r}}:=\big|\big|(2^{js}\|\dot{\Delta}_j{u}\|_{L^p})_{j\in {\mathbb Z}}\big|\big|_{\ell^r}<\infty\Big\}.
\end{align*}
Here, $u\in\mathcal{S}'_h$ denotes $u\in S'$ and $\lim\limits_{j\rightarrow -\infty}||\dot{S}_ju||_{L^\infty}=0$.
\end{defi}

\begin{defi}\label{def2.1}(\cite{IK,KY11})
Let $s\in \mathbb{R}$ and $1\leq p,r\leq\infty$, the space $\F^{s}_{p,r}(\mathbb{R}^{d})$ is defined to be the set of all tempered distributions $f\in \mathcal{S}'(\mathbb{R}^{d})$ such that $\hat{f} \in L_{loc}^1(\R^d)$ and the following norm is finite:
\begin{equation*}
\|f\|_{\F^{s}_{p,r}}:= \begin{cases} \left(\sum_{j\in\mathbb{Z}}2^{jsr}\|\widehat{\dot{\Delta}_{j}f}\|_{L^{p}}^{r}\right)^{\frac{1}{r}}
\ \ &\text{if}\ \ 1\leq r<\infty,\\
\sup_{j\in\mathbb{Z}}2^{js}\|\widehat{\dot{\Delta}_{j}f}\|_{L^{p}}\ \
&\text{if}\ \
r=\infty.
\end{cases}
\end{equation*}
\end{defi}

\begin{defi}\label{def2.2} For $0<T\leq\infty$, $s\in \mathbb{R}$ and $1\leq p, r, \lambda\leq\infty$, we set  {\rm{(}}with the usual convention if
$r=\infty${\rm{)}}:
$$
\|f\|_{\tilde{L}^{\lambda}_{T}(\F^{s}_{p,r})}:=\big(\sum_{j\in\mathbb{Z}}2^{jsr}\|\widehat{\dot{\Delta}_{j}f}\|_{L^{\lambda}(0,T;
	L^{p})}^{r}\big)^{\frac{1}{r}}.
$$
We then define the space $\tilde{L}^{\lambda}(0,T; \F^{s}_{p,r}(\mathbb{R}^{d}))$  as the set of temperate distributions $f$ over $(0,T)\times \mathbb{R}^{3}$ such that $\lim\limits_{j\rightarrow -\infty}S_{j}f=0$ in $\mathcal{S}'((0,T)\times\mathbb{R}^{d})$ and $\|f\|_{\tilde{L}^{\lambda}_{T}(\F^{s}_{p,r})}<\infty$.
\end{defi}

Next we recall Bony's decomposition from \cite{B.C.D}.
$$uv=\dot{T}_uv+\dot{T}_vu+\dot{R}(u,v),$$
with
\[\dot{T}_uv\triangleq \sum\limits_j \dot{S}_{j-1}u\dot{\Delta}_j v, \quad \dot{R}(u,v)\triangleq \sum_{j}\sum\limits_{|k-j|\leq1}\dot{\Delta}_j u \dot{\Delta}_k v.\]
This is now a standard tool for nonlinear estimates. Now we use Bony's decomposition to prove some nonlinear estimates which will be used in the proof of our theorem.

\begin{lemm}\label{le2.1}
Let $1\leq r\leq 2$. Then, there hold
\bbal
||\dot{T}_uv||_{\FB^{0}_{1,r}}\leq C||u||_{\FB^{-1}_{1,r}}||v||_{\FB^{1}_{1,r}}, \qquad
||\dot{R}(u,v)||_{\FB^{0}_{1,r}}\leq C ||u||_{\FB^{-1}_{1,r}}||v||_{\FB^{1}_{1,r}},
\end{align*}
and
\bbal
&||\dot{T}_uv||_{\LL^1_T(\FB^{0}_{1,r})}\leq C||u||_{\LL^\infty_T(\FB^{-1}_{1,r})}||v||_{\LL^1_T(\FB^{1}_{1,r})},
\\& ||\dot{R}(u,v)||_{\LL^1_T(\FB^{0}_{1,r})}\leq C ||u||_{\LL^\infty_T(\FB^{-1}_{1,r})}||v||_{\LL^1_T(\FB^{1}_{1,r})}.
\end{align*}
\end{lemm}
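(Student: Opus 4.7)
The plan is to apply Bony's paraproduct decomposition $uv=\dot T_u v+\dot T_v u+\dot R(u,v)$ and to estimate the paraproduct and the remainder separately in Fourier--Besov norms. The basic engine is $\widehat{fg}=\hat f\ast\hat g$ together with Young's convolution inequality in $L^1$ (natural for the $p=1$ Fourier index), combined with Minkowski's inequality on $\ell^r$ and H\"older's inequality on sequences.

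First I would handle the paraproduct $\dot T_u v=\sum_j \dot S_{j-1}u\cdot\dot\Delta_j v$. The Fourier support of each summand sits in an annulus $\{|\xi|\sim 2^j\}$, so $\dot\Delta_k\dot T_u v=\sum_{|j-k|\leq N_0}\dot\Delta_k(\dot S_{j-1}u\cdot\dot\Delta_j v)$ for a fixed $N_0$. Young's inequality gives $\|\widehat{\dot S_{j-1}u\cdot\dot\Delta_j v}\|_{L^1}\leq \|\widehat{\dot S_{j-1}u}\|_{L^1}\|\widehat{\dot\Delta_j v}\|_{L^1}$. For the low-frequency factor I would write $\|\widehat{\dot S_{j-1}u}\|_{L^1}\leq \sum_{j'\leq j-2} 2^{j'}\bigl(2^{-j'}\|\widehat{\dot\Delta_{j'}u}\|_{L^1}\bigr)$ and apply H\"older to the geometric weight $2^{j'}$, obtaining $\|\widehat{\dot S_{j-1}u}\|_{L^1}\lesssim 2^j\|u\|_{\FB^{-1}_{1,r}}$. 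Setting $b_k:=2^k\|\widehat{\dot\Delta_k v}\|_{L^1}$ so that $\|(b_k)\|_{\ell^r}=\|v\|_{\FB^1_{1,r}}$, this yields $\|\widehat{\dot\Delta_k\dot T_u v}\|_{L^1}\lesssim \|u\|_{\FB^{-1}_{1,r}}\sum_{|j-k|\leq N_0} b_j$; the discrete Young inequality on $\ell^r$ for the finitely-supported shift in $k$ then gives the paraproduct bound for any $r\in[1,\infty]$.

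Next I would treat the remainder $\dot R(u,v)=\sum_j \dot\Delta_j u\cdot\widetilde{\dot\Delta}_j v$. Since the Fourier support of each summand sits in a ball $\{|\xi|\lesssim 2^{j+1}\}$ instead of an annulus, $\dot\Delta_l\dot R(u,v)=\sum_{j\geq l-N_1}\dot\Delta_l(\dot\Delta_j u\cdot\widetilde{\dot\Delta}_j v)$. Write $x_{l,j}:=\|\widehat{\dot\Delta_l(\dot\Delta_j u\cdot\widetilde{\dot\Delta}_j v)}\|_{L^1}$. The partition-of-unity identity $\sum_l\varphi(2^{-l}\xi)\equiv 1$ together with Young's inequality gives the \emph{summed-in-}$l$ bound $\sum_l x_{l,j}\leq \|\widehat{\dot\Delta_j u\cdot\widetilde{\dot\Delta}_j v}\|_{L^1}\leq C a_j b_j$, where $a_j:=2^{-j}\|\widehat{\dot\Delta_j u}\|_{L^1}$. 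By Minkowski on $\ell^r$ (valid for $r\geq 1$),
\[
\|\dot R(u,v)\|_{\FB^0_{1,r}}\leq \Big\|\sum_{j\geq l-N_1}x_{l,j}\Big\|_{\ell^r_l}\leq \sum_j\|(x_{l,j})_l\|_{\ell^r}.
\]
Since $\ell^1\subset\ell^r$ for $r\geq 1$, $\|(x_{l,j})_l\|_{\ell^r}\leq \|(x_{l,j})_l\|_{\ell^1}=\sum_l x_{l,j}\leq C a_j b_j$. The hypothesis $1\leq r\leq 2$ enters at the final step: by H\"older on sequences, $\sum_j a_j b_j\leq \|(a_j)\|_{\ell^r}\|(b_j)\|_{\ell^{r'}}$; since $r\leq 2$ implies $r'\geq r$, we have $\|(b_j)\|_{\ell^{r'}}\leq \|(b_j)\|_{\ell^r}$, and we conclude $\|\dot R(u,v)\|_{\FB^0_{1,r}}\lesssim \|u\|_{\FB^{-1}_{1,r}}\|v\|_{\FB^1_{1,r}}$.

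The Chemin--Lerner (time-integrated) inequalities follow from the same Littlewood--Paley term-by-term argument, with $\|\widehat{\dot\Delta_j\cdot}\|_{L^1}$ replaced by $\|\widehat{\dot\Delta_j\cdot}\|_{L^\lambda(0,T;L^1)}$; H\"older in time distributes the integrability by placing $u$ in $L^\infty_T$ and $v$ in $L^1_T$, while the spatial Fourier estimates via Young and Minkowski go through verbatim. The main obstacle is the borderline remainder estimate at $s_1+s_2=0$, where the classical Besov remainder lemmas (which need $s_1+s_2>0$) do not apply; the key observation above is that Minkowski on $\ell^r$ together with the embedding $\ell^1\subset\ell^r$ allows one to extract the sharp product $a_j b_j$ term-by-term, and the condition $r\leq 2$ is then exactly what makes the final sum $\sum_j a_j b_j$ controllable by $\|u\|_{\FB^{-1}_{1,r}}\|v\|_{\FB^1_{1,r}}$.
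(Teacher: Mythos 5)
Your proof is correct and follows essentially the same route as the paper's: Fourier-support localization of each Bony block, Young's convolution inequality in $L^1$ on the Fourier side, and an $\ell^r$--$\ell^{r'}$ H\"older step (with $\ell^r\hookrightarrow\ell^{r'}$) which is exactly where the hypothesis $r\le 2$ enters. If anything, your handling of the paraproduct --- keeping the $\ell^r$ structure via discrete Young for the shifted sum rather than first collapsing everything to $\FB^{0}_{1,1}$ as the paper does --- is slightly more explicit about where the sequence-space exponents are used, but the mechanism is the same.
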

\begin{proof}
Note that
\bbal
||\sum_{j\in \Z}\dot{S}_{j-1}u\dot{\De}_jv||_{\FB^0_{1,r}}&\leq ||\sum_{j\in \Z}\dot{S}_{j-1}u\dot{\De}_jv||_{\FB^0_{1,1}} \\&\leq C\sum_{j\in\Z}||\widehat{\dot{S}_{j-1}u\dot{\De}_jv}||_{L^1}
\\&\leq \sum_{j\in\Z}(2^{-j}||\widehat{\dot{S}_{j-1}u}||_{L^1})\cd (2^{j}||\widehat{\dot{\De}_jv}||_{L^1})
\\&\leq C||u||_{\FB^{-1}_{1,r}}||v||_{\FB^{1}_{1,r}},
\end{align*}
and
\bbal
||\sum_{j\in \Z}\dot{\De}_ju\widetilde{\dot{\De}_j}v||_{\FB^0_{1,r}}&\leq ||\sum_{j\in \Z}\dot{\De}_ju\widetilde{\dot{\De}_j}v||_{\FB^0_{1,1}} \\&\leq
||\sum_{j\in \Z}\widehat{\dot{\De}_ju\widetilde{\dot{\De}_j}v}||_{L^1}
\\&\leq \sum_{j\in \Z}\sum_{|i-j|\leq 1}||\widehat{\dot{\De}_ju}||_{L^1}||\widehat{\dot{\De}_iv}||_{L^1}
\\&\leq C(\sum_{j\in \Z}2^{-2j}||\widehat{\dot{\De}_ju}||^2_{L^1})^{\frac12}||v||_{\FB^{1}_{1,2}}
\\&\leq C||u||_{\FB^{-1}_{1,r}}||v||_{\FB^{1}_{1,r}}.
\end{align*}
This completes the proof.
\end{proof}

\begin{lemm}\label{le2.2}
Let $d\geq 1$ and $s>1+\frac d2$. Assume that $u\in \mathcal{C}([0,T];H^s)$ be the solution of Eq. \eqref{H-J}. Then, we have for $i=1,2\cdots d$,
\bbal
& \quad ||\int^t_0e^{(t-\tau)\De}(\pa_iu\pa_i v)\dd \tau||_{\LL^\infty_T(\FB^0_{1,2})}+||\int^t_0e^{(t-\tau)\De}(\pa_iu\pa_i v)\dd \tau||_{\LL^1_T(\FB^2_{1,2})}
\\&\leq C\big(||u||_{\LL^\infty_T(\FB^0_{1,2})}||v||_{\LL^1_T(\FB^2_{1,2})}+||v||_{\LL^\infty_T(\FB^0_{1,2})}||u||_{\LL^1_T(\FB^2_{1,2})}\big).
\end{align*}
and
\bbal
||\int^t_0e^{(t-\tau)\De}(\pa_iu\pa_i v)\dd \tau||_{X_T\cap L^\infty_T(BMO)}\leq C||u||_{X_T}||v||_{X_T},
\end{align*}
where
\bbal
||u||_{X_T}=\sup\limits_{0<t\leq T}t^{\frac12}||\na u(t,\cd)||_{L^\infty}+\sup_{x\in\R^d,0<R\leq T^{\fr12}}|B(x,R)|^{-\frac12}||\na u(t,y)||_{L^2_{t,y}([0,R^2]\times B(x,R))}.
\end{align*}
\end{lemm}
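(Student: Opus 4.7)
The lemma has two parts of rather different character, and I would address them in sequence.

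\emph{First estimate (Chemin--Lerner Fourier--Besov).} The strategy is to combine heat semigroup smoothing with Bony's paraproduct decomposition. The key preliminary step is the standard heat maximal regularity in the Fourier--Besov framework: for every $f\in\LL^1_T(\F^0_{1,2})$,
\bbal
\Big\|\int_0^t e^{(t-\tau)\De}f\,\dd\tau\Big\|_{\LL^\infty_T(\F^0_{1,2})}+\Big\|\int_0^t e^{(t-\tau)\De}f\,\dd\tau\Big\|_{\LL^1_T(\F^2_{1,2})}\leq C\|f\|_{\LL^1_T(\F^0_{1,2})}.
\end{align*}
This follows block-by-block from the decay $\|\widehat{e^{(t-\tau)\De}\dot{\De}_j g}\|_{L^1}\lesssim e^{-c(t-\tau)2^{2j}}\|\widehat{\dot{\De}_j g}\|_{L^1}$ and integration in $\tau$ (the gain $2^{-2j}$ needed for the $\LL^1_T(\F^2_{1,2})$ norm comes from $\int_0^t e^{-c(t-\tau)2^{2j}}\dd\tau\leq C 2^{-2j}$). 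Applied with $f=\pa_i u\,\pa_i v$, this reduces matters to a product estimate for $\pa_i u\,\pa_i v$ in $\LL^1_T(\F^0_{1,2})$. For the latter I invoke Bony's decomposition
\bbal
\pa_i u\,\pa_i v=\dot{T}_{\pa_i u}\pa_i v+\dot{T}_{\pa_i v}\pa_i u+\dot{R}(\pa_i u,\pa_i v),
\end{align*}
plug in the Chemin--Lerner version of Lemma~\ref{le2.1} with $r=2$, and use that each derivative shifts the regularity index by one, so that $\|\pa_i u\|_{\F^{-1}_{1,2}}\lesssim\|u\|_{\F^{0}_{1,2}}$ and $\|\pa_i v\|_{\F^{1}_{1,2}}\lesssim\|v\|_{\F^{2}_{1,2}}$. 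Symmetrizing in $(u,v)$ gives exactly the asserted bilinear bound.

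\emph{Second estimate (Koch--Tataru type).} Here $X_T\cap L^\infty_T(BMO)$ is the Koch--Tataru functional adapted to $BMO^{-1}$ data. I would estimate the three components (weighted $L^\infty$ for $\na$, parabolic Carleson measure for $\na$, and $L^\infty_T(BMO)$) of the bilinear operator $B(u,v)(t):=\int_0^t e^{(t-\tau)\De}(\pa_i u\,\pa_i v)\,\dd\tau$ separately, following Koch--Tataru's scheme. The pointwise weighted bound $\sup_{0<t\leq T}t^{1/2}\|\na B(u,v)(t)\|_{L^\infty}\lesssim\|u\|_{X_T}\|v\|_{X_T}$ is obtained by splitting $\int_0^t=\int_0^{t/2}+\int_{t/2}^t$, using $\|e^{(t-\tau)\De}\na\|_{L^\infty\to L^\infty}\lesssim(t-\tau)^{-1/2}$ together with $\|\na u(\tau)\|_{L^\infty}\lesssim\tau^{-1/2}\|u\|_{X_T}$ on $[t/2,t]$, and the Carleson component of $X_T$ on $[0,t/2]$ after a duality argument. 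The Carleson piece of $\na B(u,v)$ on a parabolic cube of scale $R$ is handled by splitting the Duhamel integral at $\tau=t-R^2$, dominating the tail via the pointwise bound $|\pa_i u(\tau)\pa_i v(\tau)|\leq\tau^{-1}\|u\|_{X_T}\|v\|_{X_T}$, and estimating the near-diagonal piece by tent-space duality against a Carleson measure. The $L^\infty_T(BMO)$ bound is then a by-product of the Carleson bound via the Carleson--$BMO$ characterization applied to $e^{t\De}$.

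\emph{Main obstacle.} The first estimate is essentially routine once Lemma~\ref{le2.1} and the block-level heat decay are in hand. The substantive work is the Koch--Tataru estimate, and the most delicate step is the parabolic Carleson bound for $\na B(u,v)$ at an arbitrary scale: one has to split the Duhamel integral according to parabolic scale and extract the two different pieces of $X_T$ simultaneously. The gradient-product structure $\pa_i u\,\pa_i v$ (as opposed to the divergence form $\na\!\cdot\!(u\otimes v)$ of Navier--Stokes) is what forces one to spend derivatives carefully in this step, and this is the real content of the lemma.
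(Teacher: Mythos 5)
Your proposal for the first estimate is exactly the paper's argument: the block-level heat decay giving $\|\int_0^t e^{(t-\tau)\De}f\,\dd\tau\|_{\LL^r_T(\F^{2/r}_{1,2})}\lesssim\|f\|_{\LL^1_T(\F^0_{1,2})}$ via Young's inequality in $\tau$, followed by Bony's decomposition and Lemma~\ref{le2.1} applied to $f=\pa_iu\,\pa_iv$ with the derivative shifting the regularity index by one. For the second estimate the paper simply cites Koch--Tataru, Lemari\'e-Rieusset and Wang and omits the proof, whereas your sketch (splitting the Duhamel integral at parabolic scale, using the pointwise bound $|\pa_iu\,\pa_iv|\lesssim\tau^{-1}\|u\|_{X_T}\|v\|_{X_T}$ coming from the gradient-product structure, and the Carleson--$BMO$ characterization) is the standard argument from those very references, so the two proofs agree in substance.
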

\begin{proof}
To prove the first inequality, it is enough to prove the following inequality:
\begin{align}\label{eq2.1}
\|\int_0^t e^{(t-\tau)\Delta}fd\tau\|_{\LL^r_T(\FB^{\frac{2}{r}}_{1,2})}\leq C\|f\|_{\LL^1_T(\FB^0_{1,2})},
\end{align}
for all $r\geq 1$. Indeed, setting $f=\pa_iu\pa_i v$ and applying Lemma \ref{le2.1}, we have
\bbal
& \quad ||\int^t_0e^{(t-\tau)\De}(\pa_iu\pa_i v)\dd \tau||_{\LL^\infty_T(\FB^0_{1,2})}+||\int^t_0e^{(t-\tau)\De}(\pa_iu\pa_i v)\dd \tau||_{\LL^1_T(\FB^2_{1,2})}
\\&\leq
C\|\pa_iu\pa_i v\|_{\LL^1_T(\FB^0_{1,2})}
\\&\leq C\big(||\pa_iu||_{\LL^\infty_T(\FB^{-1}_{1,2})}||\pa_iv||_{\LL^1_T(\FB^1_{1,2})}+||\pa_iv||_{\LL^\infty_T(\FB^{-1}_{1,2})}||\pa_iu||_{\LL^1_T(\FB^1_{1,2})}\big)
\\&\leq C\big(||u||_{\LL^\infty_T(\FB^0_{1,2})}||v||_{\LL^1_T(\FB^2_{1,2})}+||v||_{\LL^\infty_T(\FB^0_{1,2})}||u||_{\LL^1_T(\FB^2_{1,2})}\big).
\end{align*}
Now we prove \eqref{eq2.1}. By Young's inequality, we obtain
\bbal
& \quad \|\int_0^t \widehat{e^{(t-\tau)\Delta}\dot{\Delta}_j f}d\tau\|_{L^r_T L^1}
\\&\leq
C \|\int_0^t e^{2^{2j}(t-\tau)}\|\widehat{\dot{\Delta}_j f}\|_{L^1}d\tau\|_{L^r_T}
\\&\leq C 2^{-\frac{2}{r}j}\|\widehat{\dot{\Delta}_j f}\|_{L^1_T L^1}.
\end{align*}
This implies that inequality \eqref{eq2.1} holds.

The proof of the second inequality is essentially coming from \cite{KT01,L16,W11} and we omit it here.
\end{proof}

\begin{lemm}\label{logs}
For any $p,\rho,\sigma\in[1,\infty]$, $q\in[1,\infty)$ and $s>\frac dq$, there exists a constant $C$ depending only on $d,p$ and q, but not on $\rho,\sigma$ such that for $u\in \dot{B}^{\frac dp}_{p,\rho}\cap B^s_{q,\sigma}$, we have
\bbal
\|u\|_{L^\infty}\leq C (1+\|u\|_{\dot{B}^{\frac dp}_{p,\rho}}(\ln(e+\|u\|_{B^s_{q,\sigma}}))^{1-\frac 1\rho}).
\end{align*}
\end{lemm}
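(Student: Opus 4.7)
The plan is to apply the classical logarithmic interpolation argument via the Littlewood--Paley decomposition: split the dyadic sum at a frequency level $N$ chosen logarithmically in $\|u\|_{B^s_{q,\sigma}}$, control the middle dyadic block by the homogeneous norm $\dot B^{d/p}_{p,\rho}$ (losing a power of $N$ through H\"older in the index variable), and control the high-frequency tail by the inhomogeneous norm $B^s_{q,\sigma}$, using the positive regularity gain $s-d/q$.

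Concretely, for $N\in\mathbb{N}$ to be chosen, I would write
\[
\|u\|_{L^\infty}\leq \|\Delta_{-1}u\|_{L^\infty}+\sum_{0\leq j\leq N}\|\Delta_j u\|_{L^\infty}+\sum_{j>N}\|\Delta_j u\|_{L^\infty}.
\]
For $j\geq 0$ we have $\Delta_j u=\dot\Delta_j u$, and Bernstein's inequality gives $\|\Delta_j u\|_{L^\infty}\leq C\,2^{jd/p}\|\dot\Delta_j u\|_{L^p}$, so H\"older over the $N+1$ indices yields
\[
\sum_{0\leq j\leq N}\|\Delta_j u\|_{L^\infty}\leq C(N+1)^{1-1/\rho}\|u\|_{\dot B^{d/p}_{p,\rho}},
\]
with a constant depending only on $d,p$. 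For the tail I would switch to the $L^q$-version of Bernstein, $\|\Delta_j u\|_{L^\infty}\leq C\,2^{jd/q}\|\Delta_j u\|_{L^q}$, and use $s>d/q$ together with $\ell^\sigma\hookrightarrow\ell^\infty$ to obtain
\[
\sum_{j>N}\|\Delta_j u\|_{L^\infty}\leq C\sum_{j>N}2^{-j(s-d/q)}\bigl(2^{js}\|\Delta_j u\|_{L^q}\bigr)\leq C\,2^{-N(s-d/q)}\|u\|_{B^s_{q,\sigma}},
\]
with a constant depending only on $d,q,s$. The residual block $\Delta_{-1}u$ has fixed compact spectral support, so Bernstein gives $\|\Delta_{-1}u\|_{L^\infty}\leq C\|\Delta_{-1}u\|_{L^q}$, a bounded low-frequency contribution absorbed into the additive constant on the right-hand side.

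It remains to sum the three bounds and to optimize $N$. Taking $N$ as the smallest nonnegative integer with $2^{N(s-d/q)}\geq e+\|u\|_{B^s_{q,\sigma}}$, i.e.\ $N\simeq (s-d/q)^{-1}\log(e+\|u\|_{B^s_{q,\sigma}})$, renders the high-frequency contribution $O(1)$ and turns the middle-block prefactor into $(N+1)^{1-1/\rho}\simeq\bigl(\log(e+\|u\|_{B^s_{q,\sigma}})\bigr)^{1-1/\rho}$ multiplying $\|u\|_{\dot B^{d/p}_{p,\rho}}$, which is the asserted inequality. The main technical point to verify is that all implicit constants are uniform in $\rho$ and $\sigma$: the H\"older step yields a constant depending only on the cardinality $N+1$ and on the exponent $1-1/\rho$, and the geometric tail is summed by $\sum_{j\geq 0}2^{-j(s-d/q)}$, which depends on $d,q,s$ but not on $\sigma$.
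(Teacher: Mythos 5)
Your argument follows the standard Kozono--Ogawa--Taniuchi splitting (the paper itself offers no proof, merely citing Theorem~2.1 of \cite{KOT02}), and the middle-block H\"older step and the high-frequency tail are handled correctly, with the uniformity in $\rho,\sigma$ properly tracked. However, there is a genuine gap in your treatment of the low-frequency block: the bound $\|\Delta_{-1}u\|_{L^\infty}\leq C\|\Delta_{-1}u\|_{L^q}$ is correct, but $\|\Delta_{-1}u\|_{L^q}$ is \emph{not} a universal constant --- it is only controlled by $C\|u\|_{B^s_{q,\sigma}}$. Inserting that bound produces a term \emph{linear} in $\|u\|_{B^s_{q,\sigma}}$ on the right-hand side, which cannot be ``absorbed into the additive constant'' and destroys the logarithmic estimate (and with it the application to the blow-up criterion in Lemma~\ref{le-hs}). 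Nor can you instead sum $\|\Delta_{-1}u\|_{L^\infty}\leq\sum_{j\leq -1}\|\dot\Delta_j u\|_{L^\infty}\leq C\sum_{j\leq-1}2^{jd/p}\|\dot\Delta_j u\|_{L^p}$ against the homogeneous norm, since an infinite sum of $\ell^\rho$ entries is only controlled when $\rho=1$.

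The fix is to decompose the low frequencies dyadically as well and exploit the geometric gain of Bernstein's inequality as $j\to-\infty$. Since $u\in\mathcal S'_h$, write $\Delta_{-1}u=\sum_{j\leq-1}\dot\Delta_j u$ and split at $-N$: the blocks with $-N\leq j\leq -1$ join your middle H\"older sum (changing $(N+1)^{1-1/\rho}$ into $(2N+1)^{1-1/\rho}$, harmless), while for $j<-N$ one uses $\dot\Delta_j u=\dot\Delta_j\Delta_{-1}u$ together with Bernstein from $L^q$ to get
\begin{align*}
\sum_{j<-N}\|\dot\Delta_j u\|_{L^\infty}\leq C\sum_{j<-N}2^{jd/q}\|\Delta_{-1}u\|_{L^q}\leq C\,2^{-Nd/q}\,\|u\|_{B^s_{q,\sigma}},
\end{align*}
which becomes $O(1)$ under essentially the same logarithmic choice of $N$ (take $N$ so that both $2^{N(s-d/q)}$ and $2^{Nd/q}$ dominate $e+\|u\|_{B^s_{q,\sigma}}$). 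With this repair the rest of your argument goes through and yields the stated inequality; note also that, as your own tail estimate shows, the constant inevitably depends on $s$ as well as on $d,p,q$, a dependence the lemma's statement omits.
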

\begin{proof}
See Theorem 2.1 in \cite{KOT02}.	
\end{proof}

\begin{lemm}\label{le-hs}
Assume $u_0\in H^s,s>1+\frac d2$, then there exists a positive time $T_{u_0}\geq \frac{1}{4C_s||u_0||_{H^s}}$ such that  Eq. \eqref{H-J} have a local solution $u\in \mathcal{C}([0,T_{u_0});H^s)$. Moreover, we have the following blow criterion
\bbal
\int^{T_{u_0}}_{0}||\na u||^2_{L^\infty}\dd t=+\infty, \qquad  \mathrm{or} \qquad \int^{T_{u_0}}_0||\na u||^2_{\B^0_{\infty,2}}\dd t=+\infty.
\end{align*}
\end{lemm}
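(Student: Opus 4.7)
My plan is to follow the classical parabolic local well-posedness scheme and then refine the energy estimate to extract the two blow-up criteria. First I would construct solutions via Picard iteration on the Duhamel formula
$$u^{n+1}(t) = e^{t\De}u_0 + \int_0^t e^{(t-\tau)\De}|\na u^n(\tau)|^2\,\dd \tau,$$
with $u^0(t) \equiv e^{t\De}u_0$. Since $s > 1 + \frac{d}{2}$, the space $H^{s-1}$ is a Banach algebra, so $|\na u^n|^2 \in H^{s-1}$ whenever $u^n \in H^s$, and the iterates are well defined. Uniform bounds come from applying $\Lambda^s=(I-\De)^{s/2}$ to the equation and pairing with $\Lambda^s u$ in $L^2$:
$$\tfrac{1}{2}\tfrac{\dd}{\dd t}\|u\|_{H^s}^2 + \|\na u\|_{H^s}^2 = \langle \Lambda^{s+1}u,\Lambda^{s-1}|\na u|^2\rangle \leq \|u\|_{H^{s+1}}\,\||\na u|^2\|_{H^{s-1}} \leq C_s\|u\|_{H^{s+1}}\|u\|_{H^s}^2,$$
where the algebra property of $H^{s-1}$ is used in the last inequality. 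Young's inequality then absorbs $\|u\|_{H^{s+1}}$ into the parabolic dissipation and leaves a Riccati-type differential inequality for $\|u\|_{H^s}^2$, whose solution stays controlled on a time interval of the form claimed in the lemma. The sequence $\{u^n\}$ is shown to be Cauchy in $L^2$ (or $H^{s-1}$), and combined with the uniform $H^s$ bound and interpolation this produces a limit $u\in\mathcal{C}([0,T_{u_0});H^s)$; uniqueness follows by running the same energy estimate at one derivative lower on the difference of two solutions.

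For the blow-up criterion the main refinement is to isolate $\|\na u\|_{L^\infty}$ in the nonlinear estimate rather than merely $\|u\|_{H^s}$. By the Moser/Kato--Ponce-type product estimate,
$$\||\na u|^2\|_{H^{s-1}} \leq C\|\na u\|_{L^\infty}\|\na u\|_{H^{s-1}} \leq C\|\na u\|_{L^\infty}\|u\|_{H^s},$$
so that after Cauchy--Schwarz and Young's inequality one obtains
$$\tfrac{\dd}{\dd t}\|u\|_{H^s}^2 + \|u\|_{H^{s+1}}^2 \leq C\|\na u\|_{L^\infty}^2\|u\|_{H^s}^2.$$
If $\int_0^{T_{u_0}}\|\na u\|_{L^\infty}^2\,\dd t < +\infty$, Gronwall keeps $\|u(t)\|_{H^s}$ bounded on $[0,T_{u_0})$, so the solution extends past $T_{u_0}$, contradicting maximality; this gives the first alternative. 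For the second one, I would apply Lemma \ref{logs} to $\na u$ with $p=\infty$, $\rho=2$, $q=2$, $\sigma=2$ and $s'=s-1>\frac{d}{2}$ to obtain
$$\|\na u\|_{L^\infty} \leq C\left(1 + \|\na u\|_{\B^0_{\infty,2}}\sqrt{\ln(e+\|u\|_{H^s})}\right).$$
Plugging this back into the previous differential inequality and dividing by $(e+\|u\|_{H^s}^2)\ln(e+\|u\|_{H^s}^2)$ yields $\tfrac{\dd}{\dd t}\ln\ln(e+\|u\|_{H^s}^2) \leq C(1+\|\na u\|_{\B^0_{\infty,2}}^2)$, and the usual log-Gronwall argument converts $\int_0^{T_{u_0}}\|\na u\|_{\B^0_{\infty,2}}^2\,\dd t <\infty$ into boundedness of $\|u\|_{H^s}$, once again forcing extendability.

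The main obstacle I foresee is the nonlinear estimate behind the blow-up criterion: one must decompose $\Lambda^{s-1}|\na u|^2$ (via a commutator or Bony paraproduct splitting) in such a way that exactly one factor is placed in $L^\infty$ and the other paired against the parabolic gain $\|u\|_{H^{s+1}}$; otherwise $\|\na u\|_{L^\infty}$ appears with the wrong power and the Gronwall step fails to close. The second criterion in $\B^0_{\infty,2}$ additionally relies on the precise exponent $1-\tfrac{1}{\rho}=\tfrac{1}{2}$ produced by Lemma \ref{logs} at $\rho=2$, which is exactly what keeps the resulting log-Gronwall inequality of Osgood type and therefore globally integrable.
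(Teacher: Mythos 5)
Your proposal is correct and follows essentially the same route as the paper: a local $H^s$ energy estimate closed by a Moser-type product bound $\||\na u|^2\|_{H^{\sigma}}\lesssim\|\na u\|_{L^\infty}\|\na u\|_{H^{\sigma}}$ leading to a Riccati inequality for existence, then Gronwall for the $L^\infty$ criterion and Lemma \ref{logs} with $\rho=2$ followed by a log-Gronwall (double-exponential) argument for the $\B^0_{\infty,2}$ criterion. The only cosmetic differences are that the paper constructs the solution by a mollified ODE plus compactness rather than Picard iteration on the Duhamel formula, and keeps the nonlinear estimate at the $H^s$ level paired against $H^s$ instead of your $H^{s-1}$/$H^{s+1}$ duality pairing; both variants are equivalent.
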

\begin{proof}
We begin by formulating a mollified version of the Cauchy problem \eqref{H-J} sa follows:
\bal\label{ll}
\pa_t\J_nu-\De \J_nu=|\na \J_nu|^2, \quad \J_nu|_{t=0}=\J_n u_0.
\end{align}
Therefore \eqref{ll} defines an ODE on $H^s$ and thus has a unique solution $u_n\in \mathcal{C}([0,T_n);H^s)$, We also have $u_n=\J_nu_n$. Applying the operator $D^s$ to both sides of , multiplying both sides of the resulting equation by $D^su_n$ and then integrating over $\R^d$ yields
the following $H^s$ energy of $u_n$ identity:
\bal\begin{split}\label{lzy-e1}
\frac12\frac{\dd}{\dd t}||u_n||^2_{H^s}+||\na u_n||^2_{H^s}&\leq |||\na u_n|^2||_{H^s}||u_n||_{H^s}
\\&\leq C_s||\na u_n||_{H^s}||\na u_n||_{L^\infty}||u_n||_{H^s}
\\&\leq \frac12||\na u_n||^2_{H^s}+C_s||u_n||^4_{H^s}.
\end{split}\end{align}
Solving differential inequality gives
\bbal
||u_n(t)||_{H^s}\leq \frac{||u_0||_{H^s}}{\sqrt{1-2C_s||u_0||^2_{H^s}t}}.
\end{align*}
Setting $T:=\frac{1}{4C_s||u_0||^2_{H^s}}$, we see from  that the solution $u_n$ exists for $0\leq t\leq T$ and
satisfies a solution size bound
\bbal
||u_n(t)||_{H^s}\leq 2||u_0||_{H^s}, \qquad 0\leq t\leq T.
\end{align*}
By a standard compactness argument, we deduce that \eqref{H-J} has a unique solution $u\in \mathcal{C}([0,T];H^s)$.

Now, we prove the corresponding blow up criterion. For simplicity, we just prove the second one. Let us assume that
\bbal
\int^{T_{u_0}}_0||\na u||^2_{B^0_{\infty,2}}\dd t<+\infty.
\end{align*}
Let $u_0\in H^s$ and $u\in \mathcal{C}([0,T_0];H^s)$ be the corresponding solution of Eq. \eqref{H-J}. Then, applying Lemma \ref{logs} and adopting the similar argument as in \eqref{lzy-e1},  we have for all $t\in[0,T_{u_0})$,
\bbal
\fr12\frac{\dd}{\dd t}||u||^2_{H^s}+||\na u||^2_{H^s}&\leq |||\na u|^2||_{H^s}||u||_{H^s}
\\&\leq C_s||\na u||_{H^s}||\na u||_{L^\infty}||u||_{H^s}
\\&\leq \frac12||\na u||^2_{H^s}+C_s||\na u||^2_{L^\infty}||u||^2_{H^s}
\\&\leq \frac12||\na u||^2_{H^s}+C_s||u||^2_{H^s}(1+||\na u||^2_{B^0_{\infty,2}}\ln(e+||u||_{H^s})),
\end{align*}
which leads to
\bbal
||u(t)||_{H^s}\leq ||u_0||_{H^s}\exp\{C_s\int^t_0(1+||\na u||^2_{B^0_{\infty,2}}\ln(e+||u||_{H^s})) \dd\tau \}.
\end{align*}
Using the Gronwall  inequality again, we get for all $t\in[0,T_{u_0})$,
\bbal
||u(t)||_{H^s}\leq (e+||u_0||_{H^s})^{\exp\{C_s\int^t_0 (1+||\na u||^2_{B^0_{\infty,2}}) \dd\tau \}}.
\end{align*}
Therefore, we can extend the solution past time $T_{u_0}$ which contradicts the assumption. This completes the proof of Lemma \ref{le-hs}.

\end{proof}

\section{Proof of the main theorem}

In this section, we will give the details for the proof of the theorem.

Define a smooth function $\chi$ with values in $[0,1]$ which satisfies
\bbal
\chi(\xi)=
\bca
1, \quad \mathrm{if} \ |\xi|\leq \frac14,\\
0, \quad \mathrm{if} \ |\xi|\geq \frac12.
\eca
\end{align*}

For the cases $2<q\leq \infty$. Letting $\chi^{\pm}_{j}(\xi)=\chi(\xi\mp2^j \e)$ for $j\in \Z$, where $\e=(1,1,\cdots,1)$, then we construct a sequence $\{f^N\}^\infty_{N=1}$ by its Fourier transform
\bbal
\widehat{f^N}(\xi)=\frac{\delta}{N^{\frac12}}\sum^{2N}_{j=N}[\chi^+_j(\xi)+\chi^-_{j}(\xi)]
\end{align*}
It is easy to see that
\bbal
f^N=\frac{2\delta}{N^{\frac12}}\sum^{2N}_{j=N}\cos(2^jx\cd \e)\check{\chi}(x).
\end{align*}
Then, we can directly calculate its Besov norm
\bbal
||f^N||_{B^{0}_{\infty,q}}\leq ||f^N||_{\FB^{0}_{1,q}}\leq C\frac{\de}{N^{\frac12}}\big(\sum^{2N}_{j=N}||\chi(x)||^q_{L^1}\big)^{\frac1q}\leq C\delta\frac{1}{N^{\frac12-\frac1q}},
\end{align*}
which implies $||f^N||_{\FB^{0}_{1,2}}\leq C\de$ and
\bbal
||f^N||_{B^{0}_{\infty,q}}\rightarrow 0, \quad N\rightarrow 0.
\end{align*}
According to Lemma \ref{le2.2}, we have for all $t\in[0,T_{f^N})$,
\bal\begin{split}\label{lzy}
||u[f^N]||_{\LL^\infty_t(\FB^0_{1,2})}+||u[f^N]||_{\LL^1_t(\FB^2_{1,2})}&\leq C||f^N||_{\FB^0_{1,2}}
\\& \quad+C||u[f^N]||_{\LL^\infty_t(\FB^0_{1,2})}||u[f^N]||_{\LL^1_t(\FB^2_{1,2})}.
\end{split}\end{align}
Choosing $\de>0$ small enough, we infer from \eqref{lzy} that
\bbal
||u[f^N]||_{\LL^\infty_t(\FB^0_{1,2})}+||u[f^N]||_{\LL^1_t(\FB^2_{1,2})}&\leq 2C||f^N||_{\FB^0_{1,2}}\leq 2C\de.
\end{align*}
Therefore, by Lemma \ref{le-hs}, we deduce that $u[f^N]\in \mathcal{C}([0,\infty);H^s)$. Now, we need to estimate the second iteration. Let us first to rewrite second iteration as follows:
\bbal
A_2(f^N)=\int^t_0e^{(t-\tau)\De}\sum^d_{i=1}[\pa_ie^{\tau\De}f^N]^2\dd \tau.
\end{align*}
By the definition of Besov spaces, we have
\bal\label{lzy-e3}\begin{split}
||\dot{\De}_{-4}[A_2(f^N)]||_{L^\infty}&\geq c\big|\int_{\R^d}\varphi(16\xi)\mathcal{F}[A_2(f^N)](\xi)\dd \xi\big|
\\&\geq c\Big|\int_{\R^d}\int^t_0e^{-(t-\tau)|\xi|^2}\varphi(16\xi)\sum^d_{i=1}\mathcal{F}\Big([\pa_ie^{\tau\De}f^N]^2\Big)(\xi)\dd \tau\dd \xi\Big|.
\end{split}\end{align}
Since  $\mathrm{Supp }\ \chi(\cdot-a)*\chi(\cdot-b)\subset B(a+b,1)$, we see that
\beq\begin{split}\label{lzy-1}
&\mathrm{Supp} \ \chi^\mu_\ell*\chi^\mu_m\cap B(0,\frac12)=\emptyset, \quad  N\leq \ell,m\leq 2N, \ \mu\in\{+,-\},
\\&\mathrm{Supp} \ \chi^+_\ell*\chi^-_m\cap B(0,\frac12)=\emptyset, \quad N\leq \ell,m\leq 2N, \ \ell\neq m.
\end{split}\eeq
According to \eqref{lzy-1}, we have
\bbal
J(t,\xi)&:=\varphi(16\xi)\sum^d_{i=1}\mathcal{F}\Big([\pa_ie^{\tau\De}f^N]^2\Big)(\xi)
\\&=\frac{-\de^2}{N}\varphi(16\xi)\sum^d_{i=1}\int_{\R^d}\eta_i(\xi_i-\eta_i)e^{-\tau(|\xi-\eta|^2+|\eta|^2)}
\\&\quad \times\sum^{2N}_{j=N}[\chi^+_j(\xi-\eta)+\chi^-_j(\xi-\eta)]\sum^{2N}_{j=N}[\chi^+_j(\eta)+\chi^-_j(\eta)]\dd\eta
\\&=\frac{-2\de^2}{N}\varphi(16\xi)\sum^{2N}_{j=N}\sum^d_{i=1}\int_{\R^d}\eta_i(\xi_i-\eta_i)e^{-\tau(|\xi-\eta|^2+|\eta|^2)}
\chi^+_j(\xi-\eta)\chi^-_j(\eta)\dd\eta.
\end{align*}
Therefore, using the Fubini theorem, we have
\bal\label{lzy-e2}\begin{split}
&\quad I(t,\xi):=\int^t_0e^{-(t-\tau)|\xi|^2}J(\xi)\dd \tau
\\&=\frac{-2\de^2}{N}\varphi(16\xi)\sum^{2N}_{j=N}\sum^d_{i=1}\int_{\R^d}\eta_i(\xi_i-\eta_i)\chi^+_j(\xi-\eta)\chi^-_j(\eta)\Big(\int^t_0
e^{-t|\xi|^2-\tau(|\xi-\eta|^2+|\eta|^2-|\xi|^2)}\dd \tau\Big)\dd \eta
\\&=\frac{-2\de^2}{N}\varphi(16\xi)e^{-t|\xi|^2}\sum^{2N}_{j=N}\sum^d_{i=1}\int_{\R^d}\eta_i(\xi_i-\eta_i)\chi^+_j(\xi-\eta)\chi^-_j(\eta)
\frac{1-e^{-t(|\eta|^2+|\xi-\eta|^2-|\xi|^2)}}{|\eta|^2+|\xi-\eta|^2-|\xi|^2}\dd \eta
\\&=\frac{\de^2}{N}\varphi(16\xi)e^{-t|\xi|^2}\sum^{2N}_{j=N}\int_{\R^d}\chi^+_j(\xi-\eta)\chi^-_j(\eta)(1-e^{2t\eta\cd(\xi-\eta)})\dd \eta
\end{split}\end{align}
Making a change of variable, we can infer form \eqref{lzy-e2} that
\bbal
I(t,\xi)=\frac{\de^2}{N}\varphi(16\xi)e^{-t|\xi|^2}\sum^{2N}_{j=N}\int_{\R^d}\chi(\xi-\eta)\chi(\eta)(1-e^{2t(\eta-2^j\e)\cd(\xi-\eta+2^j\e)})\dd \eta.
\end{align*}
Letting $t=2^{-N}$, then we have $1-e^{2t(\eta-2^j\e)\cd(\xi-\eta+2^j\e)}\geq \frac12$ and
\bal\label{lzy-e4}
|I(t,\xi)|\geq c\de^2\varphi(16\xi)\int_{\R^d}\chi(\xi-\eta)\chi(\eta)\dd \eta\geq c\de^2\varphi(16\xi), \quad \mathrm{for} \  \mathrm{all} \quad |\xi|\leq \frac14.
\end{align}
Combining \eqref{lzy-e3} and \eqref{lzy-e4}, we show that for $t=2^{-N}$,
\bbal
&\qquad\Big|\int_{\R^d}\int^{t}_0e^{-(t-\tau)|\xi|^2}\varphi(16\xi)\sum^d_{i=1}\mathcal{F}\Big([\pa_ie^{\tau\De}f^N]^2\Big)(\xi)\dd \tau\dd \xi\Big|
\\&=\int_{\R^d}\int^{t_N}_0e^{-t_N|\xi|^2}\varphi(16\xi)|I(\xi)|\dd \xi\geq c\de^2,
\end{align*}
which yields to
\bbal
||\dot{\De}_{-4}[A_2(f^N)](t)||_{L^\infty}\geq c\de^2.
\end{align*}
For any $T\geq 1$, we define
\bbal
D=\FB^0_{1,2}, \qquad S=\LL^\infty_T(\FB^0_{1,2})\cap\LL^1_T(\FB^2_{1,2}).
\end{align*}
It is easy to see that
\bbal\bca
\pa_t(u[f^N]-A_1(f^N))-\De(u[f^N]-A_1(f^N))=|\na u[f^N]|^2,
\\ u[f^N]-A_1(f^N)|_{t=0}=0,
\eca\end{align*}
which along with Lemma \ref{le2.2}  yields
\bal\label{lzy-e5}
||u[f^N]-A_1(f^N)||_{S}\leq C||u[f^N]||^2_{S}  \leq C||f^N||^2_D\leq C\de^2.
\end{align}
Using the similar argument as in \eqref{lzy-e5}, it shows that
\bbal
||u[f^N]-A_1(f^N)-A_2(f^N)||_{S}&\leq C(||u[f^N]||_S+||A_1(f^N)||_S)||u[f^N]-A_1(f^N)||_{S}
\\&\leq C\de^3.
\end{align*}
Then, according to the definition of the Besov norm, we have
\bbal
||u[f^N](t_N)||_{B^0_{\infty,q}}&\geq c||\dot{\De}_{-4}\Big(u[f^N](t_N)\Big)||_{L^\infty}
\\&\geq c||\dot{\De}_{-4}\Big(A_2(f^N)(t_N)\Big)||_{L^\infty}
\\& \quad -||\dot{\De}_{-4}\Big(u[f^N]-A_1(f^N)-A_2(f^N)\Big)||_{L^\infty}.
\end{align*}
Therefore, choosing sufficient small $\de>0$ and large enough $N$, it follows that
\bbal
||u[f^N](t_N)||_{B^0_{\infty,q}}&\geq ||\dot{\De}_{-4}[A_2(f^N)](t_N)||_{L^\infty}-||u[f^N]-A_1(f^N)-A_2(f^N)||_{S}
\\&\geq c\de^2-C\de^3\geq c\de^2.
\end{align*}
This completes the proof of the cases $2<q\leq \infty$.

Now, we will prove the theorem for the cases $1\leq q\leq 2$. We set $\chi^{\pm}_{j}(\xi)=\chi(\xi\mp2^j \ee)$ for $j\in \Z$, where $\ee=\frac{17}{24\sqrt{d}}(1,1,\cdots,1)$. Letting $N\in 16\mathbb{N}=\{16,32,48,\cdots\}$ and $\mathbb{N}(N)=\{k\in 8\mathbb{N}: \frac14 N\leq k\leq \frac12 N\}$, we can define a sequence $\{f_N\}^\infty_{N=1}$ by its Fourier transform
\bbal
\widehat{f_N}(\xi)=\frac{\de}{N^{\frac{1}{2q}}}\sum_{\ell\in \mathbb{N}(N)}[\widehat{\Phi^{++}_\ell}+\widehat{\Phi^{+-}_\ell}+\widehat{\Phi^{-+}_\ell}+\widehat{\Phi^{--}_\ell}],
\end{align*}
where
\bbal
&\widehat{\Phi^{++}_\ell}=e^{i2^{\ell+1}\xi\cd \ee}\chi^+_N(\xi-2^\ell\ee), \quad  \widehat{\Phi^{+-}_\ell}=e^{i2^{\ell+1}\xi\cd \ee}\chi^+_N(\xi+2^\ell\ee)
\\&\widehat{\Phi^{-+}_\ell}=e^{i2^{\ell+1}\xi\cd \ee}\chi^-_{N}(\xi-2^\ell\ee), \quad \widehat{\Phi^{--}_\ell}=e^{i2^{\ell+1}\xi\cd \ee}\chi^-_{N}(\xi+2^\ell\ee).
\end{align*}
Here, we have
\bbal
&\Phi^{++}_\ell=e^{i(x+2^{\ell+1}\ee)\cd(2^N\ee+2^{\ell}\ee)}\check{\chi}(x+2^{\ell+1}\ee), \quad \Phi^{+-}_\ell=e^{i(x+2^{\ell+1}\ee)\cd(2^N\ee-2^{\ell}\ee)}\check{\chi}(x+2^{\ell+1}\ee),
\\&\Phi^{-+}_\ell=e^{i(x+2^{\ell+1}\ee)\cd(-2^N\ee+2^{\ell}\ee)}\check{\chi}(x+2^{\ell+1}\ee), \quad \Phi^{--}_\ell=e^{i(x+2^{\ell+1}\ee)\cd(-2^N\ee-2^{\ell}\ee)}\check{\chi}(x+2^{\ell+1}\ee).
\end{align*}
It is easy to see that
\bbal
f_N&=\frac{2\de}{N^{\frac{1}{2q}}}\sum_{\ell\in \mathbb{N}(N)}\big[\cos(x+2^{\ell+1}\ee)\cd(2^N\ee+2^{\ell}\ee)
\\& \quad+\cos(x+2^{\ell+1}\ee)\cd(2^N\ee-2^{\ell}\ee)\big]\check{\chi}(x+2^{\ell+1}\ee).
\end{align*}
Since $\check{\chi}$ is a Schwartz funtion, we have
\bbal
|\check{\chi}(x)|\leq C(1+|x|)^{-M}, \qquad  M\gg 1.
\end{align*}
Then, we have
\bbal
||f_N||_{B^{0}_{\infty,q}}&\leq ||f_N||_{B^{0}_{\infty,1}}\leq C ||f_N||_{L^\infty}\leq C\frac{\de}{N^{\frac{1}{2q}}}\Big|\Big|\sum_{\ell\in \mathbb{N}(N)}\check{\chi}(x+2^{\ell+1}\ee)\Big|\Big|_{L^\infty}
\\&\leq C\frac{\de}{N^{\frac{1}{2q}}}\Big|\Big|\sum_{\ell\in \mathbb{N}(N)}\frac{1}{(1+|x+2^{\ell+1}\ee|)^M}\Big|\Big|_{L^\infty}\leq C\frac{\de}{N^{\frac{1}{2q}}}\rightarrow 0, \quad N\rightarrow \infty.
\end{align*}
According to Lemma \ref{le2.2}, we have for all $t\in[0,T_{f_N})$,
\bal\label{lzy-e6}
||u[f_N]||_{X_t}\leq ||f_N||_{BMO}+||u[f_N]||^2_{X_t}.
\end{align}
Choosing $\de>0$ small enough, we infer from \eqref{lzy-e6} that for all $t\in[0,T_{f_N})$,
\bbal
||u[f_N]||_{X_t}\leq 2C||f_N||_{BMO}\leq 2C\frac{\de}{N^{\frac{1}{2q}}}.
\end{align*}
Therefore, by Lemma \ref{le-hs}, we deduce that $u[f^N]\in \mathcal{C}([0,\infty);H^s)$.

Next, we need to estimate the second iteration in the Besov norm. To complete our goal, we will use the following lemmas to hackle with.
\begin{lemm}\label{le-e1}
Let $1\leq q\leq 2$. Then, there exists a positive constant $c$ independent of $N$ and $\de$ such that
\bbal
||(\pa_if_N)^2||_{B^0_{\infty,q}(\mathbb{N}(N))}\geq c\de^2 2^{2N}, \qquad N\gg1.
\end{align*}
\end{lemm}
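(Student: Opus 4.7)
The plan is to expand $(\pa_i f_N)^2$ into a double sum indexed over $(\ell,\ell')\in\mathbb{N}(N)^2$ and over sign patterns in $\{+,-\}^4$, show that the Littlewood--Paley block at each scale $j\in\mathbb{N}(N)$ receives a contribution from essentially one diagonal pair, and then sum the resulting lower bound in $\ell^q$. Since $\#\mathbb{N}(N)\sim N/32$, the $N^{1/q}$ gained from summation will exactly cancel the $1/N^{1/q}$ factor inherited from the normalization $\de/N^{1/(2q)}$ in the definition of $f_N$, producing the claimed $c\de^2 2^{2N}$.

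First I would note that $\widehat{\Phi^{\mu\nu}_\ell}$ is supported in a unit ball around $\mu 2^N\ee+\nu 2^\ell\ee$, so $\pa_i\Phi^{\mu\nu}_\ell = i(\mu 2^N+\nu 2^\ell)\ee_i\Phi^{\mu\nu}_\ell + O(1)$, and the product $\pa_i\Phi^{\mu\nu}_\ell\cd\pa_i\Phi^{\mu'\nu'}_{\ell'}$ has Fourier support in a unit ball around $(\mu+\mu')2^N\ee+(\nu 2^\ell+\nu' 2^{\ell'})\ee$. Only the cross pairs $\mu+\mu'=0$ produce low-frequency content; the remaining pairs concentrate near $\pm 2^{N+1}\ee$, well outside any block indexed by $\mathbb{N}(N)$. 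The specific choice $\ee=\tfrac{17}{24\sqrt d}(1,\dots,1)$ puts the diagonal target frequency $2^{j+1}\ee$ at magnitude $\tfrac{17}{12}\cd 2^j\approx 1.42\cd 2^j$, sitting firmly inside the block-$j$ annulus $[\tfrac34,\tfrac83]\cd 2^j$; meanwhile off-diagonal frequencies $(\nu 2^\ell+\nu' 2^{\ell'})\ee$ with $\ell\ne\ell'$ both in $8\mathbb{N}$ have magnitude at most $\tfrac{17}{24}(1+2^{-8})\,2^{\max(\ell,\ell')}<\tfrac34\cd 2^{\max(\ell,\ell')}$, so they land in a strictly lower block. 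Consequently $\dot{\De}_j[(\pa_i f_N)^2]$ is, modulo Schwartz-tail error, determined by the diagonal pair $\ell=\ell'=j$ alone.

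Next I would compute this diagonal contribution explicitly. The two surviving pairings $\pa_i\Phi^{+\nu}_j\cd\pa_i\Phi^{-\nu}_j$ for $\nu\in\{+,-\}$ each carry the leading multiplicative factor $(2^{2N}-2^{2j})\bigl(\tfrac{17}{24\sqrt d}\bigr)^2$, and the translation phase $e^{i 2^{j+1}\xi\cd\ee}$ built into the construction arranges them to add coherently as $2\cos\!\big(2^{j+1}(x+2^{j+1}\ee)\cd\ee\big)\,\check\chi^2(x+2^{j+1}\ee)$. Evaluating at $x=-2^{j+1}\ee$ gives $\check\chi(0)^2>0$, whence
$$
\|\dot{\De}_j[(\pa_i f_N)^2]\|_{L^\infty}\ \geq\ c\,\frac{\de^2}{N^{1/q}}\cd 2^{2N}.
$$
Raising to the $q$-th power and summing over $j\in\mathbb{N}(N)$ with $\#\mathbb{N}(N)\gtrsim N$ then gives
$$
\|(\pa_i f_N)^2\|_{B^0_{\infty,q}(\mathbb{N}(N))}\ \geq\ \Big(\tfrac{N}{32}\Big)^{1/q}\cd c\,\frac{\de^2 2^{2N}}{N^{1/q}}\ \geq\ c'\de^2 2^{2N}.
$$

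The main obstacle is the block-isolation step: one must verify simultaneously that the numerology $17/24$ places the diagonal target frequency inside its dyadic annulus, that the 8-spacing of $\mathbb{N}(N)$ pushes every off-diagonal and every non-diagonal same-block contribution into a strictly lower block, and that no destructive cancellation occurs between the two surviving cross-pairings (this is where the specific phase factor $e^{i 2^{\ell+1}\xi\cd\ee}$ in the definition of $\Phi^{\mu\nu}_\ell$ is essential). Once these geometric facts are in place, the remainder of the argument is routine convolution bookkeeping together with a direct application of H\"older's inequality for the $\ell^q$ summation.
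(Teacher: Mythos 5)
Your argument is correct and follows essentially the same route as the paper: expand $(\pa_i f_N)^2$, use Fourier--support bookkeeping to reduce $\De_j$ of the sum to the diagonal pairings $\pa_i\Phi^{+\nu}_j\pa_i\Phi^{-\nu}_j$, evaluate the resulting profile $\tfrac{289}{288d}(2^{2N}-2^{2j})\cos[(x+2^{j+1}\ee)\cd 2^{j+1}\ee]\check{\chi}^2(x+2^{j+1}\ee)$ at $x=-2^{j+1}\ee$ to get the per-block lower bound $c\de^2 2^{2N}N^{-1/q}$, and recover the factor $N^{1/q}$ by summing over the $\sim N/32$ indices of $\mathbb{N}(N)$. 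The only (harmless) deviation is in the terms with $\max\{\ell,\ell'\}=j$, $\ell\neq\ell'$: you annihilate them outright on the Fourier side via the numerology $\tfrac{17}{24}(1+2^{-8})<\tfrac34$, whereas the paper retains them as $I_2,\dots,I_5$ and bounds them by $C\de^2 2^{2N}N2^{-MN}$ using the physical-space separation of the Schwartz tails $\check{\chi}(\cdot+2^{j+1}\ee)$ and $\check{\chi}(\cdot+2^{k+1}\ee)$ --- both are valid.
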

\begin{proof}
Since $\mathrm{Supp }\ \chi(\cdot-a)*\chi(\cdot-b)\subset B(a+b,1)$, we see that
\bbal
\mathrm{Supp} \ \widehat{\Phi^{\nu\lambda}_\ell}*\widehat{\Phi^{\nu\mu}_m}\subset B(\nu2^{N+1}\ee,2^{2+\frac N2}), \quad \lambda,\mu,\nu\in\{+,-\}.
\end{align*}
Moreover, noticing that Supp $\widehat{\Phi^{++}_\ell}*\widehat{\Phi^{--}_m}\subset B((2^\ell-2^m)\e,1)$, we see that Supp $\widehat{\Phi^{++}_\ell}*\widehat{\Phi^{--}_\ell}\subset B(0,1)$. It follows that for any $j\in \mathbb{N}(N)$,
\bal\label{l}\begin{split}
&\quad \De_j[(\pa_i f_N)^2]\\&=\frac{8\de^2}{N^{\frac1q}}\De_j\sum_{\ell,m\in \mathbb{N}(N)}(\pa_i\Phi^{++}_\ell\pa_i\Phi^{-+}_m+\pa_i\Phi^{++}_\ell\pa_i\Phi^{--}_m
+\pa_i\Phi^{+-}_\ell\pa_i\Phi^{-+}_m
+\pa_i\Phi^{+-}_\ell\pa_i\Phi^{--}_m)
\end{split}\end{align}
Note that if $\max\{\ell,m\}>j,\ \ell,m \in  \mathbb{N}(N), \ \ell\neq m$, we have
\bal\label{l1}\begin{split}
&\mathrm{Supp} \ \widehat{\Phi^{++}_\ell}*\widehat{\Phi^{-+}_m}\subset B((2^\ell+2^m)\ee,1)\subset 2^{j}\mathcal{C}(0,3,2^N),
\\&\mathrm{Supp} \ \widehat{\Phi^{++}_\ell}*\widehat{\Phi^{--}_m}\subset B((2^\ell-2^m)\ee,1)\subset 2^{j}\mathcal{C}(0,3,2^N),
\\&\mathrm{Supp} \ \widehat{\Phi^{+-}_\ell}*\widehat{\Phi^{-+}_m}\subset B((-2^\ell+2^m)\ee,1)\subset 2^{j}\mathcal{C}(0,3,2^N),
\\&\mathrm{Supp} \ \widehat{\Phi^{+-}_\ell}*\widehat{\Phi^{--}_m}\subset B((-2^\ell-2^m)\ee,1)\subset 2^{j}\mathcal{C}(0,3,2^N).
\end{split}\end{align}
Else if $\max\{\ell,m\}<j,\ \ell,m\in \mathbb{N}(N),\ \ell\neq m$, we have
\bal\label{l2}\begin{split}
&\mathrm{Supp} \ \widehat{\Phi^{++}_\ell}*\widehat{\Phi^{-+}_m}\subset B((2^\ell+2^m)\ee,1)\subset 2^{j}B(0,\frac12),
\\&\mathrm{Supp} \ \widehat{\Phi^{++}_\ell}*\widehat{\Phi^{--}_m}\subset B((2^\ell-2^m)\ee,1)\subset 2^{j}B(0,\frac12),
\\&\mathrm{Supp} \ \widehat{\Phi^{+-}_\ell}*\widehat{\Phi^{-+}_m}\subset B((-2^\ell+2^m)\ee,1)\subset 2^{j}B(0,\frac12),
\\&\mathrm{Supp} \ \widehat{\Phi^{+-}_\ell}*\widehat{\Phi^{--}_m}\subset B((-2^\ell-2^m)\ee,1)\subset 2^{j}B(0,\frac12).
\end{split}\end{align}
Moreover, using the facts that
\bbal
&\mathrm{Supp} \ \widehat{\Phi^{++}_j}*\widehat{\Phi^{-+}_j}\subset B(2^{j+1}\ee,1)\subset 2^j\mathcal{C}(0,\frac43,\frac32),
\\&\mathrm{Supp} \ \widehat{\Phi^{++}_j}*\widehat{\Phi^{--}_j}\subset B(0,1),
\\&\mathrm{Supp} \ \widehat{\Phi^{+-}_j}*\widehat{\Phi^{-+}_j}\subset B(0,1),
\\&\mathrm{Supp} \ \widehat{\Phi^{+-}_j}*\widehat{\Phi^{--}_j}\subset B(2^{j+1}\ee,1)\subset 2^j\mathcal{C}(0,\frac43,\frac32),
\end{align*}
we have
\bal\label{l3}\begin{split}
&\quad \De_j(\pa_i\Phi^{++}_j\pa_i\Phi^{-+}_j+\pa_i\Phi^{++}_j\pa_i\Phi^{--}_j+\pa_i\Phi^{+-}_j\pa_i\Phi^{-+}_j+\pa_i\Phi^{+-}_j\pa_i\Phi^{--}_j)
\\&=\De_j(\pa_i\Phi^{++}_j\pa_i\Phi^{-+}_j+\pa_i\Phi^{+-}_j\pa_i\Phi^{--}_j)
\\&=\pa_i\Phi^{++}_j\pa_i\Phi^{-+}_j+\pa_i\Phi^{+-}_j\pa_i\Phi^{--}_j.
\end{split}\end{align}
Therefore, plugging \eqref{l1}-\eqref{l3} into \eqref{l}, we have
\bbal
&\quad \De_j[(\pa_i f_N)^2]
\\&=\frac{8\de^2}{N^{\frac1q}}(\pa_i\Phi^{++}_j\pa_i\Phi^{-+}_j+\pa_i\Phi^{+-}_j\pa_i\Phi^{--}_j)
\\& \quad+\frac{8\de^2}{N^{\frac1q}}\De_j\sum_{k\leq j-1,k\in \mathbb{N}(N)}(\pa_i\Phi^{++}_j\pa_i\Phi^{-+}_k+\pa_i\Phi^{++}_k\pa_i\Phi^{-+}_j)
\\& \quad+\frac{8\de^2}{N^{\frac1q}}\De_j\sum_{k\leq j-1,k\in \mathbb{N}(N)}(\pa_i\Phi^{++}_j\pa_i\Phi^{--}_k+\pa_i\Phi^{++}_k\pa_i\Phi^{--}_j)
\\& \quad+\frac{8\de^2}{N^{\frac1q}}\De_j\sum_{k\leq j-1,k\in \mathbb{N}(N)}(\pa_i\Phi^{+-}_j\pa_i\Phi^{-+}_k+\pa_i\Phi^{+-}_k\pa_i\Phi^{-+}_j)
\\& \quad+\frac{8\de^2}{N^{\frac1q}}\De_j\sum_{k\leq j-1,k\in \mathbb{N}(N)}(\pa_i\Phi^{+-}_j\pa_i\Phi^{--}_k+\pa_i\Phi^{+-}_k\pa_i\Phi^{--}_j)
\\&=I_1+I_2+I_3+I_4+I_5.
\end{align*}
Note that
\bbal
\frac{N^{\frac1q}}{8\de^2}I_1&=\pa_i\Phi^{++}_j\pa_i\Phi^{-+}_j+\pa_i\Phi^{+-}_j\pa_i\Phi^{--}_j
\\&=(2^{2N}-2^{2j})[\Phi^{++}_j\Phi^{-+}_j+\Phi^{+-}_j\Phi^{--}_j]+2\cos[(x+2^{j+1}\ee)\cd 2^{j+1}\e][\pa_i\check{\chi}(x+2^{j+1}\ee)]^2
\\&=\frac{289}{288d}(2^{2N}-2^{2j})\cos[(x+2^{j+1}\ee)\cd 2^{j+1}\ee]\check{\chi}^2(x+2^{j+1}\ee)
\\&\quad +2\cos[(x+2^{j+1}\ee)\cd 2^{j+1}\ee][\pa_i\check{\chi}(x+2^{j+1}\ee)]^2.
\end{align*}
By choosing $x=-2^{j+1}\ee$ and using $\mathcal{F}^{-1}\big(\xi_i\chi\big)(0)=0$, we have
\bal\label{y1}
||I_1||_{L^\infty}\geq c\frac{\delta^2}{N^{\frac1q}}2^{2N}\check{\chi}^2(0).
\end{align}
Since Supp $\Phi^{\mu,\nu}_k\subset 2^N\mathcal{C}, k\in \mathbb{N}(N), \mu,\nu\in\{+,-\}$, we have
\bal\label{y2}\begin{split}
&\quad \ ||I_2||_{L^\infty}\\&\leq C\frac{\delta^2}{N^{\frac1q}}\Big|\Big|\sum_{k\leq j-1,k\in \mathbb{N}(N)}(\pa_i\Phi^{++}_j\pa_i\Phi^{-+}_k+\pa_i\Phi^{++}_k\pa_i\Phi^{-+}_j)\Big|\Big|_{L^\infty}
\\&\leq C\frac{\delta^2}{N^{\frac1q}}\Big|\Big|\sum_{k\leq j-1,k\in \mathbb{N}(N)}2^{2N}(1+|x+2^N\ee+2^{j}\ee|^2)^{-M}(1+|x-2^N\ee+2^{k}\ee|^2)^{-M}\Big|\Big|_{L^\infty}
\\&\leq C\frac{\delta^2}{N^{\frac1q}}2^{2N}j2^{-MN}\leq C\frac{\delta^2}{N^{\frac1q}}2^{2N}N2^{-NM}.
\end{split}\end{align}
Applying similar estimate as in \eqref{y2}, we have
\bal\label{y3}
||I_i||_{L^\infty}\leq C\frac{\delta^2}{N^{\frac1q}}2^{2N}N2^{-NM}, \quad \mathrm{for} \quad i=3,4,5.
\end{align}
Combining \eqref{y1}-\eqref{y3}, we deduce that for $N\gg1$,
\bbal
||\De_j[(\pa_i f_N)^2]||_{L^\infty}\geq c\frac{\de^2}{N^{\frac1q}}2^{2N}.
\end{align*}
Therefore, by the definition of the Besov norm, we have
\bbal
||(\pa_if_N)^2||_{B^0_{\infty,q}(\mathbb{N}(N))}\geq \Big(\sum_{j\in \mathrm{N}(N)}||\De_j[(\pa_if_N)^2]||^q_{L^\infty}\Big)^{\frac1q}\geq c\de^22^{2N}.
\end{align*}
This completes the proof of this lemma.
\end{proof}

Now, we can rewrite
\bbal
A_2(f_N)&=\int^t_0e^{(t-\tau)\De}\sum^d_{i=1}[\pa_ie^{\tau\De}f_N]^2\dd \tau
\\&=t\sum^d_{i=1}(\pa_if_N)^2+t\sum_{r\geq 2}\frac{1}{r!}(t\De)^{r-1}\big[\sum^d_{i=1}(\pa_if_N)^2\big]
\\&\quad +\int^t_0e^{(t-\tau)\De}\sum^d_{i=1}(e^{\tau\De}+1)\pa_if_N(e^{\tau\De}-1)\pa_if_N\dd \tau.
\end{align*}

\begin{lemm}\label{le-e2}
Let $1\leq q\leq 2$ and $t=\de2^{-2N}$. Then there exists a constant $C$ independent of $N$ and $\de$ such that
\bbal
t\sum_{r\geq 2}\frac{1}{r!}||(t\De)^{r-1}\big[(\pa_if_N)^2\big]||_{B^0_{\infty,q}(\mathbb{N}(N))}\leq C\de^4.
\end{align*}
\end{lemm}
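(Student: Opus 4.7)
The plan is to exploit two facts: first, that $\Delta_j$ commutes with $(t\Delta)^{r-1}$ (both being Fourier multipliers), so that
\[
\Delta_j\bigl[(t\Delta)^{r-1}(\pa_if_N)^2\bigr]=(t\De)^{r-1}\Delta_j[(\pa_if_N)^2];
\]
and second, that for $j\in\mathbb{N}(N)$ the frequency $2^j$ is much smaller than $2^N$, so the parabolic regularization factor $t|\xi|^2$ is tiny on the relevant support. Concretely, since $\Delta_j[(\pa_if_N)^2]$ has Fourier support in $\{|\xi|\sim 2^j\}$, iterated Bernstein gives a constant $C$ (uniform in $r$, since each application of $\De$ preserves the annular support) with
\[
\bigl\|(t\De)^{r-1}\Delta_j[(\pa_if_N)^2]\bigr\|_{L^\infty}\leq (Ct\,2^{2j})^{r-1}\bigl\|\Delta_j[(\pa_if_N)^2]\bigr\|_{L^\infty}.
\]
With $t=\de2^{-2N}$ and $j\leq N/2$, one has $t\,2^{2j}\leq\de2^{-N}\ll1$, so the powers of $(t\De)$ contribute a rapidly convergent geometric factor.

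Next I would establish the uniform upper bound
\[
\bigl\|\Delta_j[(\pa_if_N)^2]\bigr\|_{L^\infty}\leq C\frac{\de^2}{N^{1/q}}2^{2N},\qquad j\in\mathbb{N}(N).
\]
This can be read off directly from the explicit decomposition $I_1+\cdots+I_5$ carried out in the proof of Lemma \ref{le-e1} (the bound on $\|I_1\|_{L^\infty}$ there gives not only the lower bound but, trivially, the matching upper bound; $I_2,\ldots,I_5$ are even smaller), or alternatively from the crude estimate $\|\pa_if_N\|_{L^\infty}\leq C\de N^{-1/(2q)}2^{N}$ coming from the Schwartz decay of $\check\chi$ (the translates by $2^{\ell+1}\ee$ are well-separated so the sum is controlled by a single summand up to a constant).

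Combining these two ingredients yields
\[
\bigl\|(t\De)^{r-1}[(\pa_if_N)^2]\bigr\|_{B^0_{\infty,q}(\mathbb{N}(N))}\leq C(N/32)^{1/q}\cdot C^{r-1}(\de2^{-N})^{r-1}\frac{\de^2}{N^{1/q}}2^{2N}\leq C^{r-1}\de^{r+1}2^{-(r-1)N}2^{2N},
\]
where the factor $N^{1/q}$ from the cardinality of $\mathbb{N}(N)$ cancels the $N^{-1/q}$ in the pointwise estimate. Multiplying by $t/r!=\de2^{-2N}/r!$ and summing,
\[
t\sum_{r\geq 2}\frac{1}{r!}\bigl\|(t\De)^{r-1}[(\pa_if_N)^2]\bigr\|_{B^0_{\infty,q}(\mathbb{N}(N))}\leq\sum_{r\geq 2}\frac{C^{r-1}}{r!}\de^{r+2}2^{-(r-1)N}\leq C\de^4,
\]
since the $r=2$ term is $C\de^4 2^{-N}$ and higher $r$ terms decay geometrically in $2^{-N}$ (and also in $\de$, as $\de\ll1$).

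The only delicate point is verifying that the Bernstein constant $C$ can be taken independent of $r$, which is why I would phrase the estimate as $\|\De g\|_{L^\infty}\leq C2^{2j}\|g\|_{L^\infty}$ for $g$ with Fourier support in the annulus $\{|\xi|\sim 2^j\}$ and then iterate, using the fact that $\De g$ has the same Fourier support as $g$. Everything else is bookkeeping; the whole argument is essentially a geometric series in the small quantity $\de2^{-N}$.
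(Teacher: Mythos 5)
Your proposal is correct and follows essentially the same route as the paper: bound $\|\Delta_j[(\pa_i f_N)^2]\|_{L^\infty}$ by $\de^2 N^{-1/q}2^{2N}$, exploit $t2^{2j}\le \de 2^{-N}$ for $j\in\mathbb{N}(N)$, pick up the cardinality factor $N^{1/q}$ from the $\ell^q$ sum, and sum the resulting series in $r$. The only (harmless) difference is how the $r$-dependence of the multiplier constant is tracked: the paper estimates $\|\mathcal{F}^{-1}(|\xi|^{2(r-1)}\varphi)\|_{L^1}\lesssim 9^r r^d$ and lets the $1/r!$ absorb it, whereas you iterate the single-power Bernstein inequality on the fixed annulus to get $C^{r-1}$ directly.
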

\begin{proof}
Using the Bernstein's multiplier estimates, we have
\bbal
||\mathcal{F}^{-1}\big((t|\xi|^2)^{r-1}\varphi_j\mathcal{F}f)\big)||_{L^\infty}&\lesssim (t2^{2j})^{r-1}||\mathcal{F}^{-1}(|\xi|^{2(r-1)}\varphi)||_{L^1}||f||_{L^\infty}
\\&\lesssim (t2^{2j})^{r-1}9^rr^d||f||_{L^\infty}.
\end{align*}
Noticing that $j\in \mathbb{N}(N)$ implies that $j\leq \frac N2$, one has that
\bbal
t\sum_{r\geq 2}\frac{1}{r!}||(t\De)^{r-1}\big[(\pa_if_N)^2\big]||_{B^0_{\infty,q}(\mathbb{N}(N))}&\lesssim t\sum_{r\geq 2}\frac{1}{r!}\Big(\sum_{j\in \mathbb{N}(N)}(t2^{2j})^{q(r-1)}||\pa_if_N||^{2q}_{L^\infty}\Big)^{\frac1q}
\\&\lesssim \de^3\sum_{r\geq 2}\frac{9^rr^d}{r!}N^{\frac1q}N^{-\frac1q}(t2^N)^{r-1}
\\&\lesssim \de^4,
\end{align*}
which completes the proof of this lemma.
\end{proof}

\begin{lemm}\label{le-e3}
Let $1\leq q\leq 2$, $\de\ll1$, and $t=\de2^{-2N}$. Then there exists a constant $C$ independent of $N$ and $\de$ such that
\bbal
||\int^t_0e^{(t-\tau)\De}(e^{\tau\De}+1)\pa_if_N(e^{\tau\De}-1)\pa_if_N\dd \tau||_{B^{0}_{\infty,q}(\mathbb{N}(N))}\leq C\de^4.
\end{align*}
\end{lemm}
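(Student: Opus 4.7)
The plan is to estimate each Littlewood--Paley block $\De_j$ for $j\in\mathbb{N}(N)$ in $L^\infty$ by $C\de^4/N^{1/q}$, and then exploit the fact that $|\mathbb{N}(N)|\lesssim N$ so that the $\ell^q$ sum produces exactly $C\de^4$. Since $\De_j$ commutes with $e^{(t-\tau)\De}$ and the heat semigroup is an $L^\infty$-contraction, after moving $\De_j$ inside the time integral it suffices to control
\bbal
\int_0^t \big\|\De_j\big[(e^{\tau\De}+1)\pa_if_N\cdot(e^{\tau\De}-1)\pa_if_N\big]\big\|_{L^\infty}\dd\tau
\end{align*}
for each fixed $j\in\mathbb{N}(N)$.

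The crucial observation is that $\pa_if_N$ has Fourier support inside the annulus $|\xi|\sim 2^N$: indeed, the Fourier transforms of the building blocks $\Phi^{\pm\pm}_\ell$ are supported in balls of radius $1/2$ around $\pm 2^N\ee\pm 2^\ell\ee$ with $\ell\leq N/2$, and these balls all lie in $\{|\xi|\sim 2^N\}$. On this annulus the multipliers obey the pointwise bounds $|e^{-\tau|\xi|^2}+1|\leq 2$ and $|e^{-\tau|\xi|^2}-1|\leq C\tau 2^{2N}$. A standard frequency-localized Bernstein argument (for the second bound, writing $(e^{\tau\De}-1)g=\int_0^\tau e^{s\De}\De g\,\dd s$ and using $\|\De g\|_{L^\infty}\lesssim 2^{2N}\|g\|_{L^\infty}$ whenever $g$ has Fourier support in $|\xi|\sim 2^N$) then yields $\|(e^{\tau\De}+1)\pa_if_N\|_{L^\infty}\leq C\|\pa_if_N\|_{L^\infty}$ and $\|(e^{\tau\De}-1)\pa_if_N\|_{L^\infty}\leq C\tau 2^{2N}\|\pa_if_N\|_{L^\infty}$. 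Multiplying these two inequalities, using that $\De_j$ is bounded on $L^\infty$ uniformly in $j$, and integrating $\tau$ over $[0,t]$ with $t=\de 2^{-2N}$ yields the uniform $j$-independent bound
\bbal
\big\|\De_j[\cdots]\big\|_{L^\infty}\leq Ct^2 2^{2N}\|\pa_if_N\|_{L^\infty}^2=C\de^2 2^{-2N}\|\pa_if_N\|_{L^\infty}^2.
\end{align*}

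For the $L^\infty$ norm of $\pa_if_N$ I reuse the kernel estimate already invoked to prove $\|f_N\|_{B^0_{\infty,q}}\to 0$: the Schwartz decay $|\check\chi(x)|\lesssim(1+|x|)^{-M}$ together with the essentially disjoint translates $\check\chi(x+2^{\ell+1}\ee)$ gives $\|f_N\|_{L^\infty}\leq C\de N^{-1/(2q)}$, and Bernstein at frequency scale $2^N$ upgrades this to $\|\pa_if_N\|_{L^\infty}\leq C\de 2^N N^{-1/(2q)}$. Squaring and plugging in produces $\|\De_j[\cdots]\|_{L^\infty}\leq C\de^4/N^{1/q}$ for every $j\in\mathbb{N}(N)$, and the $\ell^q(\mathbb{N}(N))$ sum absorbs exactly this $N^{-1/q}$ factor and delivers the advertised $C\de^4$. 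The main delicate point is the frequency-localized bound $\|(e^{\tau\De}-1)\pa_if_N\|_{L^\infty}\leq C\tau 2^{2N}\|\pa_if_N\|_{L^\infty}$; this is precisely where the scaling $t=\de 2^{-2N}$ enters in an essential way, since it guarantees $\tau|\xi|^2\leq\de\ll 1$ throughout the Fourier support of $\pa_if_N$ and thus keeps the linearization of $e^{-\tau|\xi|^2}-1$ legitimate. In contrast to Lemma~\ref{le-e1}, no oscillation or resonance cancellation is needed here, only modulus bounds on the heat multiplier combined with the pre-existing kernel decay for $f_N$.
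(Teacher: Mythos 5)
Your proposal is correct and follows essentially the same route as the paper: a blockwise $L^\infty$ estimate multiplied by the cardinality $|\mathbb{N}(N)|\lesssim N$ to handle the $\ell^q$ sum, boundedness of the heat semigroup on $L^\infty$, the smallness $\|(e^{\tau\De}-1)g\|_{L^\infty}\lesssim \tau 2^{2N}\|g\|_{L^\infty}$ for $g$ frequency-localized at $2^N$ (which is where $t=\de 2^{-2N}$ enters), and the bound $\|f_N\|_{L^\infty}\lesssim \de N^{-1/(2q)}$ combined with Bernstein. The only cosmetic difference is that you derive the $(e^{\tau\De}-1)$ bound from the Duhamel identity $\int_0^\tau e^{s\De}\De g\,\dd s$ plus Bernstein, whereas the paper Taylor-expands the heat multiplier and sums the series; both yield the same quantitative gain of one factor of $\de$.
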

\begin{proof}
Using the fact that $||e^{t\De}u||_{L^\infty}\leq C||u||_{L^\infty}$, we have
\bal\label{z1}\begin{split}
&\qquad ||\int^t_0e^{(t-\tau)\De}(e^{\tau\De}+1)\pa_if_N(e^{\tau\De}-1)\pa_if_N\dd \tau||_{B^{0}_{\infty,q}(\mathbb{N}(N))}\\&\lesssim t2^{2N}N^\frac1q||f_N||_{L^\infty} \max_{\tau\in(0,t]}||(e^{\tau\De}-1)f_N||_{L^\infty}\lesssim \de^2N^{\frac{1}{2q}}\max_{\tau\in(0,t]}||(e^{\tau\De}-1)f_N||_{L^\infty}.
\end{split}\end{align}
Using Taylor's expansion, one has that
\bbal
||(e^{\tau\De}-1)f_N||_{L^\infty}\leq \sum_{r\geq1}\frac{\tau^r}{r!}||\mathcal{F}^{-1}\big(|\xi|^{2r}\mathcal{F}(f_N)\big)||_{L^\infty}.
\end{align*}
Since Supp $\widehat{f_N}\subset 2^N\mathcal{C}$, we see that
\bal\label{z2}\begin{split}
||\mathcal{F}^{-1}\big(|\xi|^{2r}\mathcal{F}(f_N)\big)||_{L^\infty}&\lesssim ||\mathcal{F}^{-1}(|\xi|^{2r}\varphi_N)||_{L^1}||f_N||_{L^\infty}
\\&\lesssim \de9^rr^d2^{2Nr}\lesssim \de (18)^r2^{2kr}.
\end{split}\end{align}
It follows from \eqref{z2} that
\bal\label{z3}\begin{split}
||(e^{\tau\De}-1)f_N||_{L^\infty}&\lesssim \de N^{-\frac{1}{2q}}[\sum_{r\geq 1} \frac{(18\tau2^{2N})^r}{r!}-1]\lesssim \de N^{-\frac{1}{2q}}(e^{18\tau2^{2N}}-1)
\\&\lesssim \de N^{-\frac{1}{2q}}(e^{18\de}-1)\lesssim \de^2N^{-\frac{1}{2q}}.
\end{split}\end{align}
Plugging \eqref{z3} into \eqref{z1}, we have the result.

\end{proof}

Collecting Lemmas \ref{le-e1}-\ref{le-e3}, we immediately have
\begin{lemm}
Let $1\leq q\leq 2$ and $t=\de2^{-2N}$. Then, if we choose $\de$ small enough and $N$ large enough, there exists a positive constant $c$ independent of $N$ and $\de$ such that
\bbal
||A_2(f_N)(t)||_{B^0_{\infty,q}(\mathbb{N}(N))}\geq c\de^3.
\end{align*}
\end{lemm}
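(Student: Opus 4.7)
The plan is to read off the claim directly from the three-term decomposition of $A_2(f_N)$ written immediately before the lemma statement:
\bbal
A_2(f_N)&=t\sum^d_{i=1}(\pa_if_N)^2+t\sum_{r\geq 2}\fr{1}{r!}(t\De)^{r-1}\big[\sum^d_{i=1}(\pa_if_N)^2\big]\\
&\quad +\int^t_0e^{(t-\tau)\De}\sum^d_{i=1}(e^{\tau\De}+1)\pa_if_N(e^{\tau\De}-1)\pa_if_N\,\dd\tau.
\end{align*}
The first summand is the main term; the remaining two are remainders that need to be controlled by a higher power of $\de$.

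For the main term, I would apply Lemma~\ref{le-e1} together with the choice $t=\de2^{-2N}$ to obtain
\bbal
\Big\|t\sum^d_{i=1}(\pa_if_N)^2\Big\|_{B^0_{\infty,q}(\mathbb{N}(N))}\geq c\,t\cd \de^2 2^{2N}=c\de^3.
\end{align*}
For the Taylor remainder in the heat semigroup expansion acting on $(\pa_if_N)^2$, I would invoke Lemma~\ref{le-e2}, which gives an $O(\de^4)$ bound. For the last integral term, which encodes the deviation of $e^{\tau\De}\pa_if_N$ from $\pa_if_N$, I would invoke Lemma~\ref{le-e3}, which again gives an $O(\de^4)$ bound.

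Once these three estimates are in hand, the reverse triangle inequality on the restricted (semi)norm $\|\cdot\|_{B^0_{\infty,q}(\mathbb{N}(N))}$ — which is immediate since it is just an $\ell^q$ sum of $\|\De_j\cdot\|_{L^\infty}$ over $j\in \mathbb{N}(N)$ — yields
\bbal
\|A_2(f_N)(t)\|_{B^0_{\infty,q}(\mathbb{N}(N))}\geq c\de^3-2C\de^4.
\end{align*}
Choosing $\de$ so small that $2C\de\leq c/2$, and $N$ so large that Lemma~\ref{le-e1} applies, one gets the desired bound $\|A_2(f_N)(t)\|_{B^0_{\infty,q}(\mathbb{N}(N))}\geq \tfrac{c}{2}\de^3$, which is the claim after relabeling $c$.

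Since all three building blocks have already been established, there is no serious obstacle: the only thing to keep in mind is that the decomposition used here exploits the precise relation $t2^{2N}=\de$, which is why the exponential-type series in Lemmas~\ref{le-e2} and~\ref{le-e3} converge and produce the gain of one extra power of $\de$ over the leading term. This gain of $\de$ is what makes the argument work, and it depends crucially on choosing the time scale $t=\de2^{-2N}$ — any larger $t$ would let the parabolic smoothing eat up the nontrivial low-frequency contribution isolated in Lemma~\ref{le-e1}, and any smaller $t$ would give a lower bound weaker than $\de^3$, which would not suffice for the subsequent comparison with the full solution $u[f_N]$.
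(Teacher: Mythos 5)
Your proposal is correct and follows essentially the same route as the paper, which proves this lemma simply by ``collecting'' Lemmas~\ref{le-e1}--\ref{le-e3}: the main term $t\sum_i(\pa_if_N)^2$ contributes $c\,t\,\de^22^{2N}=c\de^3$ by Lemma~\ref{le-e1} with $t=\de2^{-2N}$, and the two remainders are $O(\de^4)$ by Lemmas~\ref{le-e2} and~\ref{le-e3}, so the reverse triangle inequality and a small choice of $\de$ give the claim.
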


Now, we will accomplish our proof of the theorem. For any $T\geq 1$, we have
\bbal
||u[f_N]-A_1(f_N)||_{Y_T}&\leq \Big|\Big|\int^t_0e^{(t-\tau)\De}(|\na u[f_N]|^2)\Big|\Big|_{Y_T}
\\&\leq C||u[f_N]||^2_{X_T}\leq C||f_N||^2_{BMO}\leq C\frac{\delta^2}{N^{\frac1q}},
\end{align*}
where
\bbal
||u||_{Y_T}=||u||_{X_T}+||u||_{L^\infty_T(BMO)}.
\end{align*}
On the other hand, we also have
\bbal
||u[f_N]-A_1(f_N)-A_2(f_N)||_{Y_T}&\leq (||u[f_N]||_{X_T}+||A_1(f_N)||_{X_T})||u[f_N]-A_1(f_N)||_{X_T}
\\&\leq ||f_N||_{BMO}||u[f_N]-A_1(f_N)||_{X_T}\leq C\frac{\de^3}{N^{\frac{3}{2q}}},
\end{align*}
which implies
\bbal
||u[f_N]-A_1(f_N)-A_2(f_N)||_{B^0_{\infty,\infty}(\mathbb{N}(N))}\leq C||u[f_N]-A_1(f_N)-A_2(f_N)||_{Y_T}\leq C\frac{\de^3}{N^{\frac{3}{2q}}}.
\end{align*}
Then, by the definition of Besov norm, we show that
\bbal
&\quad ||u[f_N]-A_1(f_N)-A_2(f_N)||_{B^0_{\infty,q}(\mathbb{N}(N))}
\\&\leq CN^{\frac1q}||u[f_N]-A_1(f_N)-A_2(f_N)||_{B^0_{\infty,\infty}(\mathbb{N}(N))}\leq C\frac{\de^3}{N^{\frac{1}{2q}}}.
\end{align*}
Choosing small enough $\de>0$, we have
\bbal
||u[f_N](t)||_{B^0_{\infty,q}}&\geq ||u[f_N]||_{B^0_{\infty,q}(\mathbb{N}(N))}
\\&\geq ||A_2(f_N)(t)||_{B^0_{\infty,q}(\mathbb{N}(N))}-||A_1(f_N)||_{B^0_{\infty,q}(\mathbb{N}(N))}
\\& \qquad -||u[f_N]-A_1(f_N)-A_2(f_N)||_{B^0_{\infty,q}(\mathbb{N}(N))}
\\&\geq c\de^3-C\frac{\de^3}{N^{\frac{1}{2q}}}\geq c\de^3,\qquad N\rightarrow \infty.
\end{align*}

\vspace*{1em}
\noindent\textbf{Acknowledgements.} This work was
partially supported by NNSFC (No. 11271382), RFDP (No. 20120171110014), MSTDF (No. 098/2013/A3), Guangdong Special Support Program (No. 8-2015) and the key project of NSF of Guangdong Province (No. 1614050000014).


\begin{thebibliography}{99}
\linespread{0}

\bibitem{A97} D. Andreucci, Degenerate parabolic equations with initial data measures, Trans. Amer. Math. Soc. 349 (1997) 3911--3923.

\bibitem{AB98} L. Amour, M. Ben-Artzi, Global existence and decay for viscous Hamilton--Jacobi equations, Nonlinear Anal. TMA 31 (1998) 621--628.

\bibitem{B92} M. Ben-Artzi, Global existence and decay for a nonlinear parabolic equation, Nonlinear Anal. 19 (1992) 763--768.

\bibitem{B.C.D} H. Bahouri, J. Chemin, R. Danchin, Fourier Analysis and Nonlinear Partial Differential Equations, Grundlehren der mathematischen Wissenschaften, {343}, {Springer}, Heidelberg (2011).

\bibitem{BP} J. Bourgain, N. Pavlovic, Ill--posedness of the Navier¨CStokes equations in a critical space in 3D, J. Funct. Anal. 255 (2008) 2233--2247.

\bibitem{BKL04} S. Benachour, G. Karch, P. Lauren\c{c}ot, Asymptotic profiles of solutions to viscous Hamilton--Jacobi equations, J. Math. Pures Appl. 83 (2004) 1275--1308

\bibitem{BL99} S. Benachour, P. Lauren\c{c}ot, Global solutions to viscous Hamilton--Jacobi equations with irregular initial data, Comm. Partial Differential Equations 24 (1999) 1999--2021.

\bibitem{BSW02} M. Ben-Artzi, P. Souplet, F.B.Weissler, The local theory for viscous Hamilton--Jacobi equations in Lebesgue spaces, J. Math. Pures Appl. (9) 81 (2002) 343--378.

\bibitem{GGK03} B. Gilding, M. Guedda, R. Kersner, The Cauchy problem for $u_t=\Delta u+|\na u|^q$, Journal of Mathematical Analysis and Applications, 284 (2003) 733--755.

\bibitem{I05} C. Imbert, A non-local regularization of first order Hamilton--Jacobi equations, J. Differ. Equ. 211 (2005) 218--246.

\bibitem{IK} T. Iwabuchi, R. Takada, Global well--posedness and ill--posedness for the Navier--Stokes equations with the Coriolis force in function spaces of Besov type, J. Funct. Anal. 267 (2014), 1321--1337.

\bibitem{IK17} T. Iwabuchi, T. Kawakami, Existence of mild solutions for a Hamilton-Jacobi equation with critical fractional viscosity in the Besov spaces, J. Math. Pures Appl. 107 (2017) 464--489.

\bibitem{KOT02} H. Kozono, T. Ogawa, Y. Taniuchi, The critical Sobolev inequalities in Besov spaces and regularity criterion to some semi--linear evolution equations, Mathematische Zeitschrift, 242(2) (2002) 251--278.

\bibitem{KPZ86} M. Kardar, G. Parisi, Y.C. Zhang, Dynamic scaling of growing interfaces, Phys. Rev. Lett. 56 (1986) 889--892.

\bibitem{KS88} J. Krug, H. Spohn, Universality classes for deterministic surface growth, Phys. Rev. A 38 (1988) 4271--4283.

\bibitem{KT01} H. Koch, D. Tataru, Well--posedness for the Navier--Stokes equations, Adv. Math. 157 (2001) 22--35.

\bibitem{KW08} G. Karch, W.A. Woyczy\'{n}ski, Fractal Hamilton--Jacobi--KPZ equations, Trans. Am. Math. Soc. 360 (2008) 2423--2442.

\bibitem{KY11} P. Konieczny, T. Yoneda, On dispersive effect of the Coriolis force for the stationary Navier--Stokes equations,
J. Differential Equations 250 (2011) 3859--3873.

\bibitem{L16} P.G. Lemari\'{e}-Rieusset, The Navier--Stokes problem in the 21st century, CRC, 2016.

\bibitem{LS03} Ph. Lauren\c{c}ot, Ph. Souplet, On the growth of mass for a viscous Hamilton--Jacobi equation, J. Anal. Math. 89 (2003) 367--383.

\bibitem{S11} L. Silvestre, On the differentiability of the solution to the Hamilton--Jacobi equation with critical fractional diffusion, Adv. Math. 226 (2011) 2020--2039.

\bibitem{Wbao} B. Wang, Ill--posedness for the Navier¨CStokes equations in critical Besov spaces, Adv. Math. 268 (2015) 350--372.

\bibitem{W11} C. Wang, Well--posedness for the heat flow of harmonic maps and the liquid crystal flow with rough initial data, Arch. Ration. Mech. Anal. 200 (2011) 1--19.



\end{thebibliography}
\end{document}